\newtheorem{thm}{Theorem}[section]
\newtheorem{lem}[thm]{Lemma}
\newtheorem{defn}[thm]{Definition}
\newtheorem{rem}[thm]{Remark}
\newtheorem{cor}[thm]{Corollary}
\theoremstyle{definition}
\newtheorem{ex}[thm]{Example}
\newtheorem{qu}[thm]{Question}
\def\qd{quadratic differential}
\def\nd{\noindent}
\title{\bf Drawing cone spherical metrics via Strebel differentials}
\author{Jijian Song$^1$, Yiran Cheng$^2$, Bo Li$^3$ and Bin Xu$^4$}
\begin{document}
\maketitle
\noindent{\small $^{1,3,4}$Wu Wen-Tsun Laboratory of Math, USTC, Chinese Academy of Sciences\\
School of Mathematical Sciences, University of Science and Technology of China\\ No. 96 Jinzhai Road, Hefei, Anhui Province 230026 P. R. China. \\
$^2$ Mathematisches Institut der Universit\"{a}t Bonn\\ Endenicher Allee 60, 53115 Bonn, Germany.
}\\

\noindent{\small \it Correspondence to be sent to: e-mail: bxu@ustc.edu.cn}
\vskip0.5cm

\noindent{\small {\bf Abstract:} {\it Cone spherical metrics} are  conformal metrics with  constant curvature one and finitely many conical singularities on compact Riemann surfaces.  By using Strebel differentials as a bridge, we construct a new class of cone spherical metrics on compact Riemann surfaces by drawing on the surfaces some class of connected metric ribbon graphs.

\noindent{\bf Keywords.} cone spherical metric, Strebel differential, metric ribbon graph

\noindent {\bf 2010 Mathematics Subject Classification.} Primary 30F30; secondary 30F45}

\section{Introduction}
There have been lots of deep applications of Jenkins-Strebel quadratic differentials (JS differentials) to the moduli space of $n$-pointed compact Riemann surfaces due to Mumford and Thurston (unpublished works), Harer \cite{Har86, Har88} and Harer-Zagier \cite{HZ86}, Kontsevich \cite{Kon92}, Looijenga \cite{Loo93, Loo95} and Hain-Looijenga \cite{HL97} et al. since Jenkins \cite{Jen78} and Strebel \cite{Strebel} proved their celebrated existence theorems about the differentials.  These applications have closed interrelation with singular conformal metrics of constant curvature $0$ defined by JS differentials. In this manuscript we apply JS differentials to the study of cone spherical metrics, which is also relevant to both Onsager's vortex model in statistical physics (\cite{CLMP92}) and the Chern-Simons-Higgs equation in superconductivity (\cite{CLW2004}).

A {\it cone spherical/hyperbolic/flat metric} (or {\it spherical/hyperbolic/flat metric} for short in the following context) is a conformal metric with constant curvature $(+1)/(-1)/0$ and finitely many conical singularities on a compact Riemann surface. A classical question asks what is the necessary and sufficient condition under which there exists a conformal metric of constant curvauture $K\in \{+1,-1,0\}$ with the prescribed conical singularities on a given compact Riemann surface. Its history goes back to {\' E}. Picard \cite{Pi1905} and H. Poincar{\' e} \cite{Po1898}.  The question can be thought of as a generalization of the celebrated Uniformization Theorem. There is a well known necessary condition for the existence problem which is given by the Gauss-Bonnet formula (Subsection \ref{subsec:elliptic}). McOwen \cite{MO88} and Troyanov \cite{Tr91} proved that this condition is also sufficient for the existence of hyperbolic and flat metrics. Moreover, they also showed that the corresponding hyperbolic or flat metric exists uniquely with the prescribed singularities on a given Riemann surface.  However, it is not the case for spherical metrics. The question in this case is still widely open nowadays. We think that the complexities of spherical metrics at least consist of the following two interconnected aspects:\\

$\noindent$ $\bullet$ Spherical metrics with the prescribed conical singularities are not unique as they have conical singularities of angles greater than $2\pi$ \cite{LT92, Tr89} and their existence does not only depend on the conical angles of singularities, but also rely on the configuration of the singularities \cite{CL14, CWWX2015}. Moreover, there exist blow-up phenomena for the corresponding nonlinear spherical PDE of spherical metrics \cite{CL14} so that we could not expect a priori $C^0$ estimate for this PDE in general. It is well known that the PDE does not admit the $C^0$ estimate as there exist noncompact families of reducible spherical metrics with the prescribed singularities \cite{CWWX2015, CKL2016}. The last author conjectures that the PDE should always admit the $C^0$ estimate except the reducible case.   \\

$\noindent$ $\bullet$ No general conjecture has been proposed though people have speculated that the existence of spherical metrics may be equivalent to some algebraic stability motivated by the famous Yau-Tian-Donaldson conjecture in K{\" a}hler Geometry. Quite recently, the first, the last authors and L. Li  \cite{SXL2017} found a correspondence between  spherical metrics with conical angles in $2\pi {\Bbb Z}_{>1}$ and line subbundles of rank two polystable vector bundles on compact Riemann surfaces, and speculated  that the correspondence there could be be generalized to a more general one valid for all spherical metrics by where the notion of parabolic vector bundle \cite{MS80} will be involved. \\

\noindent On the other hand, substantial progress and deep understanding \cite{UY2000, Er04, BdMM11, EGT1405, EG15, EGT1409, EGT1504, CLW14, MP1505, CWWX2015} have been achieved due to the effort of many mathematicians.

To better understand spherical metrics, we shall continue to construct explicit examples in this manuscript after \cite{CWWX2015}, by studying the interrelation between spherical metrics and a special class of JS differentials, called {\it Strebel differentials}. Their definitions will be given in Subsection \ref{subsec:JS}. We summarize in the following the main results of this manuscript, whose details will be given in the subsections of this introductory section.\\

\noindent {\it We at first make the key observation that a Strebel differential $q$ has real periods {\rm (Definition \ref{def:real} and Lemma \ref{lem:RealPeriod})}  so that the multi-valued function
\[ \exp\bigg(\int^z \sqrt{-q}\biggr) \]
over the Riemann surface punctured by both the zeroes and the poles of $q$ turns out to be a developing map of a cone spherical metric {\rm (Theorem \ref{thm:StrebelMetric}).}  Then we construct a new class of spherical metrics {\rm (Corollaries \ref{cor:Strebel} and \ref{cor:quasi})} by using this observation and the well known one-to-one correspondence between the set of Strebel differentials and that of certain connected metric ribbon graphs} \cite{Har88}.\\

\noindent We are motivated by Strebel's celebrated book \cite{Strebel}, the stimulating paper \cite{Mul98} by Mulase-Penkava and \cite{CWWX2015}. Q. Chen, W. Wang, Y. Wu and the last author used in \cite{CWWX2015} a meromorphic 1-form $\omega$ with simple poles and real periods to construct developing maps $\lambda\cdot\exp\bigl(\int^z \sqrt{-1}\,\omega\bigr)$ with $\lambda>0$ of reducible spherical metrics. This manuscript may be thought of as an effective and powerful expansion of \cite{CWWX2015} in the sense that the former not only generalizes the latter, but also establishes a new connection between cone spherical metrics on Riemann surfaces and metric ribbon graphs on the underlying topological surfaces.

\subsection{JS and Strebel differentials}
\label{subsec:JS}
Let $X$ be a compact connected Riemann surface of genus $g\geq 0$ and $K_X$ its canonical line bundle.  We call a meromorphic section of $K_X^{\otimes 2}$ a {\it quadratic differential}  on $X$, and call either a zero or a pole of a quadratic differential $q$ a {\it critical point} of $q$. We denote by ${\rm Crit}(q)$ the set of critical points of $q$. {\it In this manuscript we only consider quadratic differentials whose crtical points have order at least $(-2)$, i.e.  which have at most double poles, if any.} Locally, in a simply connected neighborhood of a non-critical point, $q$ can be expressed as a square of an Abelian differential, but globally it is not the case in general. We call a \qd\ $q$ {\it {\rm (}non{\rm )}-degenerate} if it is (not) globally the square of an Abelian differential.

Such a quadratic differential $q$ defines a conformal flat metric, called the $q$-metric, on $X \setminus {\rm Crit}(q)$ such that each critical point $P$ of $q$ with order $\geq (-1)$ is a conical singularity  of the metric with the angle of $\pi\,\bigl(2+{\rm ord}_P(q)\bigr),$ and the metric forms a semi-infinite cylinder near each critical point of order $(-2)$, i.e. each second order pole of $q$.

Using the similar process in \cite[Construction in Section 2]{Lann2004}, we obtain from the pair $(X,q)$ the {\it spectral cover} $\pi:{\hat X}\to X$, which is a canonical double cover branched exactly over all the critical points of $q$ with odd order, such that there exists a globally defined meromorphic 1-form $\omega$ with at most simple poles on ${\hat X}$ and $\pi^*(q)=\omega \otimes \omega.$ Moreover, ${\hat X}$ is called the {\it spectral curve of $q$}, which is connected if and only if $q$ is non-degenerate. Denote by $\tau$ the holomorphic involution of the double cover $\pi:{\hat X}\to X$. Then $\omega$ is anti-invariant under the action of $\tau$, $\tau^*\omega=-\omega$. We call a multi-valued meromorphic function $f$ on a Riemann surface $S$, not necessarily compact, {\it a projective function} if each branch of $f$ is locally univalent and the monodromy representation of $f$ gives a homomorphism from $\pi_1(S,\,B)$ to the group ${\rm PGL}(2,{\Bbb C})$ consisting of all M{\" o}bius transformations, where $B\in S$ is an arbitrarily chosen base point. Then $f(z)=\exp\,\bigl(\int^z\, \sqrt{-1}\,\omega\bigr)$ is a projective function on ${\hat X} \setminus {\rm Crit}(\omega)$ with monodromy in ${\Bbb C}^*:=\left\{z\mapsto e^{w}z:w\in {\Bbb C}\right\}$, the projective function $F(z):=\exp\,\bigl(\int^z\, \sqrt{-q}\bigr)$ on $X\setminus {\rm Crit}(q)$ has monodromy in ${\Bbb C}^*\rtimes {\Bbb Z}_2:=\langle \psi,\,z\mapsto\frac{1}{z}:\,\psi\in {\Bbb C}^*\rangle$, and $\pi^*F=F\circ \pi=f$.

\begin{defn}
\label{def:real}
{\rm Use the notions in the above paragraph. We say that $q$ {\it has real periods} if and only if $\omega$ has real periods, i.e.
$\int_\gamma\, \omega\in {\Bbb R}$, where $\gamma$ is a piecewise smooth loop in the surface of ${\hat X}$ punctured by the simple poles of $\omega$.}
\end{defn}

In a complex local coordinate chart $(U,\,z)$ of $X$, if $q$ has form $q=f_U(z)\,dz^2$, then the $q$-metric has expression $|f_U|\,|dz|^2$. The \textit{horizontal geodesics} of the $q$-metric are defined to be curves $\gamma=\gamma(t)$ on $X$ satisfying $f_U\big(\gamma(t)\big)\gamma '(t)^2 > 0$ for every $t$ in $\gamma$'s domain. A \textit{horizontal trajectory} (\textit{trajectory} for short in the following context) of the differential $q$ is a maximal horizontal geodesic, i.e. is not contained in a longer curve which is also a horizontal geodesic. A trajectory may be a closed curve (\textit{closed}), or connecting two critical points of $q$ (\textit{critical}), or neither (\textit{recurrent}). It was shown by Strebel \cite{Strebel} that the closure of a recurrent trajectory as a subset of $X$ has positive measure.

By a \textit{JS differential} on $X$ \cite[Section 5]{AC2010} we refer to a \qd\ on $X$ such that\\

\noindent $\bullet$ it has no recurrent trajectories, and

\noindent $\bullet$ it could be expressed as
\[ - \bigg(\frac{b_{-2}}{z^2} + \frac{b_{-1}}{z} + b_0 + b_1z + \cdots\bigg) \,dz^2\quad {\rm with}\quad b_{-2} > 0 \]
in a complex local coordinate chart $z$ centered at a double pole of it.\\

\noindent Hence, all of the trajectories of a JS differential are either closed or critical. The \textit{critical graph} of a Jenkins-Strebel differential $q$ is the metric ribbon graph on the surface consisting of its critical trajectories whose lengths are induced by the $q$-metric and is not necessarily connected.

By a {\it Strebel differential} on $X$ we refer to a JS differential on $X$ which has $n\geq 1$ double poles and has no simple pole, whose trajectories have finite lengths, and whose critical graph is connected and decomposes $X$ into $n$ open topological discs, which contain the $n$ poles, respectively. {\it Later on, we only consider connected metric ribbons graphs {\rm (}metric ribbon graphs {\rm in short)} on compact oriented topological surfaces which decompose the surfaces into several discs.} Near one of its double poles, a Strebel differential has expression $-\bigl(b_{-2}z^{-2}+b_{-1}z^{-1}+b_0+b_1z+\cdots\bigr)dz^2$, where $b_{-2}>0$ is a constant independent of the coordinate chart $z$ centered at the pole, and we call the positive number $\sqrt{b_{-2}}$ the {\it residue} of the differential at the pole. Let $P_1, P_2, \cdots, P_n$ be all the double poles of a Strebel differential $q$ on $X$ such that $(2-2g-n)<0$ and $a_j>0$ be the residue of $q$ at $P_j$ for $j=1, 2, \cdots, n$. Then we call $\alpha(q) = (a_1, a_2, \cdots, a_n)$ the {\it residue vector} of $q$. We call the partition $\lambda(q)=(m_1,\cdots,m_\ell)$ of $(2n+4g-4)>0$ formed by the orders of $m_1,\cdots, m_\ell$ of the zeroes $Q_1,\cdots, Q_\ell$ of $q$  the {\it zero partition} of $q$. We call the following ${\Bbb R}$-divisor
\[ D_q=\sum_{j=1}^n\, (a_j-1)\,P_j+\sum_{k=1}^\ell\, \frac{m_k}{2}\, Q_k \]
the {\it spherical divisor associated with} $q$.

Given $n\geq 1$ marked points $P_1,\cdots, P_n$ on $X$ such that $2-2g-n<0$ and $n$ positive numbers $a_1,\cdots, a_n$, Strebel \cite[Theorem 23.5]{Strebel} proved that there exists a unique Strebel differential on $X$ which has double poles at $P_1,\cdots, P_n$ with residues $a_1,\cdots, a_n$. Harer \cite{Har86} proved that there exists a one-to-one correspondence between the set of metric ribbon graphs and the set of Strebel differentials.  Hence, in order to prove the existence of some special class of Strebel differentials, we only need to construct the corresponding metric ribbon graphs.

The following observation forms a cornerstone of the construction of spherical metrics in this manuscript.\\

\noindent {\bf The key observation} (Lemma \ref{lem:RealPeriod}) {\it  Each Strebel differential has real periods.}

\subsection{Cone spherical metrics}
\label{subsec:elliptic}
Let $D=\sum_{j=1}^n\,(\theta_j-1)\,P_j$ be a ${\Bbb R}$-divisor on a compact connected Riemann surface $X$ such that $\theta_j>0$. We call that a conformal metric $ds^{2}$ on $X$ {\it represents} $D$ or has a {\it conical singularity} at $P_j$ with {\it cone angle} $2\pi\theta_j>0$ for all $j=1,2,\cdots,n$ if $ds^{2}$ is a smooth conformal metric on $X\setminus {\rm supp}\,D=X\setminus\{P_1,\cdots, P_n\}$ and in a neighborhood $U_j$ of $P_j$, $ds^{2}$ has form $e^{2u_{j}}\,|dz|^2$, where $z$ is a local complex coordinate defined in $U_j$ centered at $P_j$ and the real valued function $u_{j}-(\theta_j-1)\,\ln\,|z|$ lies in the space $C^0(U_j)\cap C^\infty\bigl(U_j\setminus \{P_j\}\bigr)$. If the (Gaussian) curvature $K_{ds^{2}}$ of $ds^{2}$ satisfies some mild regularity condition, there holds the Gauss-Bonnet formula (\cite[Proposition 1]{Tr91}),
\[ \int_{\Sigma^*}\, K_{ds^{2}}\, {\rm d}V_{ds^{2}} = 2\pi\,\chi(X,\,D):=2\pi\,\Bigl(\chi(X)+\deg\,D\Bigr), \]
where we denote by $\chi(X)$ the Euler number of $X$, and by $\deg\,D=\sum_{j=1}^n\, (\theta_j-1)$ the degree of  $D$.

As mentioned before, $\chi(X,\,D)=0$ ($<0$) forms the necessary and sufficient condition for the existence of a cone flat (hyperbolic) metric representing $D$. However, $\chi(X,\,D)>0$ is only a necessary condition for the existence of spherical metrics representing $D$ and is not sufficient at all. Since a conical singularity $P$ of angle $2\pi$ of a conformal metric with constant curvature is actually a smooth point of the metric (\cite[Proposition 3.6]{CWWX2015}), we may look at it as a {\it marked} smooth point of the metric. Since the existence problem of spherical metrics is widely open, it is meaningful to find more examples for the better understanding of spherical metrics. Using the viewpoint of Complex Analysis, we shall construct new examples of cone spherical metrics in terms of Strebel differentials. We need prepare the notion of developing map at first.

Let $f$ be a projective function on a Riemann surface $S$. Then in a local complex coordinate chart $(U,\,z)$ of $S$, the Schwarzian derivative $\{f,\,z\}=\bigl(\frac{f''(z)}{f'(z)}\bigr)' - \frac{1}{2}\bigl(\frac{f''(z)}{f'(z)}\bigr)^2$ of $f$ is a single valued meromorphic function on $(U,z)$. We call that a projective function $f$ on $X\setminus {\rm supp}\, D$ {\it is compatible with the real divisor $D=\sum_{j=1}^n\, (\theta_j-1)P_j$ on $X$} if for each $1\leq j\leq n$ there exists a complex coordinate chart $(U_j,\,z_j)$ of $X$ centered at $P_j$ such that the meromorphic function $\{f,\,z_j\}$ has the principal singular part of $\frac{1-\theta_j^2}{2}\,z_j^{-2}$. We call a projective function $f$ on $S$ has {\it unitary monodromy} if it has monodromy in
\[ {\rm PSU}(2)=\left\{z\mapsto \frac{az+b}{-\overline{b}z+\overline{a}}:|a|^2+|b|^2=1\right\}\subset {\rm PGL}(2,\,{\Bbb C}).\]
It was proved in \cite[Theorem 3.4]{CWWX2015} that there exists an spherical metric $ds^2$ on $X$ representing $D$ if and only if there is a projective function $F$ on $X \setminus {\rm supp} \, D$ which is compatible with $D$ and has unitary monodromy. Moreover, $ds^2$ can be thought of as the pull-back $F^*(g_{\rm st})$ by $F$ of the standard metric $g_{\rm st}=\frac{4|dw|^2}{(1+|w|^2)^2}$ on the Riemann sphere $\overline{\Bbb C}={\Bbb C}\cup\{\infty\}$. We call $F$ a {\it developing map} of the spherical metric $ds^2$. We call an spherical metric $ds^2$ {\it reducible} if it has a developing map $F$ whose monodromy falls into ${\rm U}(1)=\left\{z\mapsto e^{\sqrt{-1}t}z:t\in [0,\,2\pi)\right\}$; otherwise, we call $ds^2$ {\it irreducible} (\cite[p.76]{UY2000}). Reducible (irreducible) metrics are also called cone spherical metrics with {\it co-axial {\rm(}non-coaxial{\rm )} holonomy} in the literature such as \cite{MP1505, Er2017}.    For simplicity, we may regard (the conjugacy class of) the monodromy of a developing map of an spherical metric as the {\it monodromy of the metric} (\cite[Lemma 2.2]{CWWX2015}).

By the key observation in Subsection \ref{subsec:JS}, we can construct spherical metrics in terms of Strebel differentials in the following

\begin{thm}
\label{thm:StrebelMetric}
Let $q$ be a Strebel differential on a compact connected Riemann surface $X$ and $D_q$ the spherical divisor associated with $q$. Then the projective function $F(z)=\exp\big(\int^z \sqrt{-q}\big)$ on $X\setminus {\rm supp}\, D_q$ has monodromy in $${\rm U}(1)\rtimes {\Bbb Z}_2:=\langle \psi,\,z\mapsto 1/z:\psi\in {\rm U}(1) \rangle\subset {\rm PSU}(2)$$ and is compatible with $D_q$. In particular, $F$ is a developing map of a cone spherical metric $ds^2$ representing $D_q$.
\end{thm}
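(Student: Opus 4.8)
The plan is to verify the hypotheses of the criterion \cite[Theorem 3.4]{CWWX2015}: once I show that $F$ is a genuine projective function on $X\setminus{\rm supp}\,D_q$ whose monodromy lies in the unitary group ${\rm U}(1)\rtimes{\Bbb Z}_2\subset{\rm PSU}(2)$ and which is compatible with $D_q$, that theorem immediately yields the spherical metric $ds^2=F^*(g_{\rm st})$ representing $D_q$ with $F$ as a developing map. Thus the work splits into (i) the monodromy computation and (ii) the local Schwarzian computation at the critical points, preceded by the routine remark that $F$ is locally univalent away from ${\rm Crit}(q)={\rm supp}\,D_q$ (which holds since $(\log F)'=\sqrt{-q}$ is finite and nonvanishing there, and at any double pole with residue $a_j=1$ the apparent singularity of $F=u^{a_j}$ is removable).

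For the monodromy I would separate the two sources of multivaluedness of $F=\exp\bigl(\int^z\sqrt{-q}\bigr)$. The first is the ambiguity in the branch of the square root: a loop encircling an odd-order zero of $q$ flips the sign of $\sqrt{-q}$, hence sends $F\mapsto 1/F$, which is exactly the involution $z\mapsto 1/z$ generating the ${\Bbb Z}_2$ factor. The second source is the periods: along a loop $\gamma$ preserving the branch of $\sqrt{-q}$, the function $F$ is multiplied by $\exp\bigl(\oint_\gamma\sqrt{-q}\bigr)$. Here the key observation (Lemma \ref{lem:RealPeriod}) enters decisively. Lifting $\gamma$ to the spectral cover ${\hat X}$, where $\sqrt{-q}$ corresponds to $\pm\sqrt{-1}\,\omega$, gives $\oint_\gamma\sqrt{-q}=\pm\sqrt{-1}\int_{\hat\gamma}\omega\in\sqrt{-1}\,{\Bbb R}$ because $\omega$ has real periods; and around each double pole $P_j$ the residue of $\sqrt{-q}$ is exactly the positive real number $a_j$, so the factor is $e^{2\pi\sqrt{-1}\,a_j}$. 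In every case the multiplier lies in ${\rm U}(1)$, and since $z\mapsto 1/z$ normalizes ${\rm U}(1)$ (conjugation inverting it), the monodromy is confined to ${\rm U}(1)\rtimes{\Bbb Z}_2$.

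For compatibility I would compute the Schwarzian in the natural coordinate at each critical point, noting first that $\{F,z\}$ is a single-valued meromorphic function since distinct branches of $F$ differ by M\"obius transformations and the Schwarzian is invariant under post-composition with such. Writing $q=\phi\,dz^2$ and $F=e^{g}$ with $g'=\sqrt{-\phi}$, a direct computation gives $\{F,z\}=\{g,z\}-\tfrac{1}{2}(g')^2=\tfrac{\phi''}{2\phi}-\tfrac{5(\phi')^2}{8\phi^2}+\tfrac{\phi}{2}$. At a double pole $P_j$ the residue condition makes $u:=\exp\bigl(a_j^{-1}\int\sqrt{-q}\bigr)$ a single-valued local coordinate in which $q=-a_j^2\,u^{-2}\,du^2$ exactly, whence $F=u^{a_j}$ and $\{F,u\}=\tfrac{1-a_j^2}{2}\,u^{-2}$, the prescribed principal part for $\theta_j=a_j$ with no $u^{-1}$ term. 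At a zero $Q_k$ of order $m_k$ the distinguished parameter $v$ (for which $q=c_0\,v^{m_k}\,dv^2$ with $c_0\neq0$) reduces the formula above to $\{F,v\}=-\tfrac{m_k(m_k+4)}{8}\,v^{-2}+\tfrac{c_0}{2}v^{m_k}$, whose principal part $\tfrac{1-\theta_k^2}{2}\,v^{-2}$ matches $\theta_k=\tfrac{m_k}{2}+1$, again with no $v^{-1}$ term. These are precisely the coefficients prescribed by $D_q$.

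The main obstacle, and the place where the Strebel hypothesis does the real work, is the monodromy step: it is only because $q$ has real periods and real, positive residues $a_j$ that both types of multipliers are forced into ${\rm U}(1)$ rather than a general one-parameter subgroup of ${\Bbb C}^*$. The Schwarzian computation is essentially mechanical once one passes to the natural coordinates; the point needing care there is the existence of those coordinates, where the single-valuedness of $u$ hinges on the residue being exactly $a_j$ (so that the relevant period of $a_j^{-1}\sqrt{-q}$ is $2\pi\sqrt{-1}$), and this is simultaneously what forces the vanishing of the $z^{-1}$ term in the principal part.
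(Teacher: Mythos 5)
Your proposal is correct and follows essentially the same route as the paper: the monodromy is controlled by the key observation that $q$ has real periods (via the spectral cover, with the branch-flip around odd-order zeroes supplying the $z\mapsto 1/z$ factor and the residues $a_j$ supplying ${\rm U}(1)$ multipliers), and compatibility is checked in the natural local coordinates at the poles and zeroes. The only cosmetic difference is that you evaluate an explicit Schwarzian formula $\{F,z\}=\tfrac{\phi''}{2\phi}-\tfrac{5(\phi')^2}{8\phi^2}+\tfrac{\phi}{2}$ in those coordinates, whereas the paper reads off $\{F,z\}$ from the observation that a branch of $F$ equals $z^{a}$ or $z^{1+k/2}$ up to a M\"obius transformation; both computations agree.
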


\begin{rem}
\label{rem:red}
{\rm If $q$ is globally the square of an Abelian differential $\omega$, then the theorem in this case reduces to \cite[Theorem 1.5]{CWWX2015}. It was shown there that $\omega$ could actually give a one-parameter family of reducible spherical metrics on $X$ by the one-parameter family $\left\{f_\lambda(z)=\lambda\cdot\exp\,\bigl(\int^z\,\sqrt{-1}\,\omega\bigr):\lambda>0\right\}$ of developing maps. }
\end{rem}

Using Theorem \ref{thm:StrebelMetric} and the existence theorem of Strebel differentials (Theorem \ref{thm:Strebel}), we obtain the following existence result of spherical metrics.

\begin{cor}
\label{cor:Strebel}
Given any $n\geq 1$ positive numbers $a_1,\cdots, a_n$ and any $n$ points $P_1,\cdots, P_n$ on a compact connected Riemann surface $X$ of genus $g$ such that $(2-2g-n)<0$, there exists an spherical metric $ds^2$ on $X$ which has conical angles of $2\pi a_j$ at $P_j$ for $1\leq j\leq n$ and $\ell$ extra conical singularities $Q_1,\cdots, Q_\ell$ which have angles $(2+m_1)\pi,\cdots, (2+m_\ell)\pi$, respectively, such that $m_1,\cdots, m_\ell$ are positive integers of sum $(2n+4g-4)$. Note that each point $P_j$ with $a_j=1$ is a marked smooth point of $ds^2$.
\end{cor}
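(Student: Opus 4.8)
The plan is to obtain this statement as a direct combination of Strebel's existence theorem for Strebel differentials (Theorem \ref{thm:Strebel}) and Theorem \ref{thm:StrebelMetric}, so that no genuinely new analysis is required. First I would invoke the existence theorem: since the marked points $P_1,\ldots,P_n$ satisfy $2-2g-n<0$ and the prescribed numbers $a_1,\ldots,a_n$ are positive, there exists a (unique) Strebel differential $q$ on $X$ whose double poles are exactly $P_1,\ldots,P_n$ and whose residue at $P_j$ equals $a_j$. This $q$ is the single object that feeds into the rest of the argument.

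Next I would pin down the zeroes of $q$ by a degree count. A \qd\ is a meromorphic section of $K_X^{\otimes 2}$, so the degree of its divisor equals $\deg K_X^{\otimes 2}=2(2g-2)=4g-4$. Writing $Q_1,\ldots,Q_\ell$ for the zeroes of $q$ with orders $m_1,\ldots,m_\ell$ and subtracting the contribution $-2n$ of the $n$ double poles, one gets
\[
\sum_{k=1}^\ell m_k-2n=4g-4,
\]
hence $\sum_{k=1}^\ell m_k=2n+4g-4$, which is strictly positive precisely because $2-2g-n<0$. This identifies the zero partition $\lambda(q)=(m_1,\ldots,m_\ell)$ as a partition of the positive integer $2n+4g-4$ into positive integers, exactly as asserted.

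Then I would apply Theorem \ref{thm:StrebelMetric} to $q$: the projective function $F(z)=\exp\bigl(\int^z\sqrt{-q}\bigr)$ is a developing map of a cone spherical metric $ds^2$ on $X$ representing the associated spherical divisor
\[
D_q=\sum_{j=1}^n (a_j-1)\,P_j+\sum_{k=1}^\ell \frac{m_k}{2}\,Q_k.
\]
Finally I would read the cone angles off $D_q$ using the convention that the coefficient $\theta-1$ of a point in the divisor yields cone angle $2\pi\theta$: at $P_j$ the coefficient $a_j-1$ gives angle $2\pi a_j$, while at $Q_k$ the coefficient $m_k/2$ gives angle $2\pi\bigl(1+\tfrac{m_k}{2}\bigr)=(2+m_k)\pi$. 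When $a_j=1$ the angle at $P_j$ is $2\pi$, so $P_j$ is a marked smooth point of $ds^2$.

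Since the whole argument is this two-step reduction, I do not expect any serious obstacle. The only point demanding a moment's care is the degree bookkeeping in the second paragraph, which simultaneously controls the total angle contribution of the zeroes and guarantees the positivity of $2n+4g-4$; everything else is a transcription of the data of $D_q$ into conical angles.
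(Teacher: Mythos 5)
Your proposal is correct and follows essentially the same route as the paper: the paper's own proof is precisely the combination of Theorem \ref{thm:Strebel} (existence of the Strebel differential with prescribed double poles and residues) with Theorem \ref{thm:StrebelMetric}, identifying the extra singularities $Q_1,\ldots,Q_\ell$ with the zeroes of $q$. Your added degree count $\sum_k m_k = \deg K_X^{\otimes 2}+2n = 2n+4g-4$ is exactly the bookkeeping the paper builds into the definition of the zero partition, so nothing is missing.
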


\begin{rem}
{\rm A more general version of Theorem \ref{thm:StrebelMetric} and Corollary \ref{cor:Strebel} also holds true as we replace Strebel differentials by JS differentials with real periods and having at least a double pole. The proof is quite similar. The crucially valuable point of the theorem and the corollary is that we could use directly Strebel differentials, or, equivalently, their metric ribbon graphs to construct cone spherical metrics. This justifies the title of this manuscript.

B. Li and the last author \cite{LX2017} proved a relevant statement that an irreducible spherical metric with monodromy in ${\rm U}(1)\rtimes {\Bbb Z}_2$ has a developing map of form $\exp\big(\int^z \sqrt{-\phi}\big)$, where $\phi$ is a non-degenerate JS differential with real periods and having at least a double pole.}
\end{rem}

Motivated by Theorem \ref{thm:StrebelMetric} and Corollary \ref{cor:Strebel}, we propose the following definition and question.

\begin{defn}
\label{def:res}
{\rm Let $g\geq 0$ and $n>0$ be integers such that $(2-2g-n)<0$. We say a vector $\alpha=(a_1,\cdots, a_n)\in ({\Bbb R}_{>0})^n$ and a partition $\lambda=(m_1,\cdots, m_\ell)$ of $(2n+4g-4)$ are {\it compatible} if there exist a compact connected Riemann surface $X$ of genus $g$ and a Strebel differential $q$ on $X$ such that $q$ has residue vector $\alpha$ and zero partition $\lambda$.}
\end{defn}

\begin{qu}
What is the necessary and sufficient condition under which a vector in $({\Bbb R}_{>0})^n$ and a partition of $(2n+4g-4)$ are compatible?
\end{qu}

By drawing corresponding {\it trivalent metric ribbon graphs} (Definition \ref{defn:tri}), we could answer this question for the simple partition $(1,\cdots, 1)$ in the following

\begin{thm}
\label{thm:SimpleZeros}
Let $g \geq 0$ and $n > 0$ be integers such that $2-2g-n<0$ and $a_1,\cdots, a_n$ be $n$ positive numbers
such that $a_1\leq a_2\leq \cdots \leq a_n$. Suppose that $a_1+a_2 \neq a_3$ if $(g,n)=(0,3)$.
The residue vector $(a_1,\cdots, a_n)$ and the simple partition $(1,\cdots, 1)$ of $(2n+4g-4)$ are compatible.\end{thm}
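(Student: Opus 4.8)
The plan is to pass, through Harer's one-to-one correspondence between Strebel differentials and connected metric ribbon graphs \cite{Har86, Har88}, from this analytic statement to a purely combinatorial--metric construction. First I would record the dictionary. A zero of order $m$ of $q$ is a conical singularity of cone angle $\pi(2+m)$ and corresponds to a vertex of valence $m+2$ of the critical graph, so the simple partition $(1,\dots,1)$ forces every vertex to be trivalent; the $j$-th double pole $P_j$ lies in a disc whose boundary is a closed trajectory whose $q$-length equals the circumference $2\pi a_j$ of the associated semi-infinite cylinder, so the perimeter of the $j$-th face equals $2\pi a_j$. An Euler-characteristic count then shows that a connected trivalent ribbon graph of genus $g$ with $n$ faces has exactly $V=2n+4g-4$ vertices and $E=3n+6g-6$ edges, matching the required number $2n+4g-4$ of simple zeros. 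Thus the theorem reduces to constructing a connected trivalent ribbon graph of genus $g$ with $n$ faces, together with strictly positive edge lengths, whose face perimeters are $2\pi a_1\le\cdots\le2\pi a_n$; the Strebel differential is then produced by the correspondence and Theorem~\ref{thm:StrebelMetric} supplies the metric. I would organize the metric realization through its linear structure: for a fixed combinatorial type each face perimeter is a positive combination of edge lengths (with multiplicity two for an edge abutting the same face on both sides), so the realizable perimeter vectors form the open cone positively spanned by the face-incidence vectors $e_i+e_j$ of ordinary edges and $2e_i$ of bridges. Choosing the ribbon graph amounts to choosing these generators, and since among admissible pairs $E-n=2(n-3)+6g\ge1$ holds precisely when $(g,n)\neq(0,3)$, the entire difficulty is whether the spanned cone reaches the prescribed sorted vector.

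I would treat the base case $(g,n)=(0,3)$ first, as it both explains and sharpens the hypothesis. Here $V=2$ and $E=3$, and up to isomorphism there are exactly two trivalent planar graphs: the theta graph (three parallel edges), whose three bigon faces have perimeters $x_i+x_j$ and which is realizable exactly under the triangle inequalities, i.e. $a_3<a_1+a_2$ after sorting; and the dumbbell (two loops joined by a bridge), whose outer face has perimeter $2\pi a_1+2\pi a_2$ plus twice the bridge length, realizing exactly $a_3>a_1+a_2$. The wall $a_3=a_1+a_2$ forces a zero-length edge in either graph, and since these are the only two combinatorial types, it is genuinely unrealizable with strictly positive lengths: this is precisely the excluded configuration.

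For every other $(g,n)$ the idea is that richer, deliberately asymmetric graphs furnish a cone that straddles each such wall. At the wall $a_n=a_1+\cdots+a_{n-1}$ the governing functional is $\psi(p)=p_n-\sum_{i<n}p_i$, and $\psi$ takes the value $+2$ on a bridge surrounding the large face, $0$ on an edge meeting the large face once, and $-2$ on an edge separating two of the small faces. Hence a graph carrying both a bridge around the large face and an edge shared by two small faces has $\psi$ of both signs on its generators, so the wall lies in the interior of its cone and is realized with strictly positive lengths. For the borderline case $(0,4)$ such a graph exists (for instance a loop, a bridge, a doubled edge and a triangle assemble into a genus-$0$, four-face trivalent graph with $\psi\in\{-2,0,+2\}$), and a direct computation shows every sorted positive vector, wall included, is realized. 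I would then extend this to all $(g,n)\neq(0,3)$: for $g=0,\ n\ge5$ and for all $g\ge1$ there is enough room to attach one self-abutting (loop-type) edge to every face, so that all generators $2e_1,\dots,2e_n$ occur and the cone becomes the whole positive orthant, while each unit of genus is added by inserting, inside a chosen face, a one-faced trivalent handle block whose extra edges only lengthen that face by an amount that can be kept arbitrarily small. The remaining low cases ($n\le2$, which force $g\ge1$) are handled directly, since with a single or double face the perimeter constraints impose essentially no restriction.

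The main obstacle is exactly this general construction: exhibiting, for every admissible $(g,n)\neq(0,3)$ and every sorted vector, an explicit \emph{valid} trivalent ribbon graph of the correct genus and face count whose open cone contains the target---uniformly across the balanced, dominant and intermediate-wall regimes and across all genera---and checking connectivity, the prescribed genus, trivalence, and the strict positivity of the solved edge lengths. The $(0,3)$ analysis is the cautionary model: the achievable cone is extremely sensitive to the combinatorial type, and it is only the abundance of asymmetric graphs available once $E>n$ that rescues the walls which the symmetric (theta, dumbbell, tetrahedron) types systematically miss.
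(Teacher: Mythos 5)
Your opening reduction---trivalent connected ribbon graphs of genus $g$ with $n$ faces of perimeters $2\pi a_j$, via Harer's correspondence and the Euler count $V=2n+4g-4$---is exactly the paper's, as is your $(0,3)$ analysis via the theta graph and the dumbbell. The gap is in the combinatorial heart. For $g=0$ an edge has the same face on both sides precisely when it is a bridge, and no connected trivalent plane graph with more than one face can carry a doubled edge on \emph{every} face: pass to the bridge tree whose nodes are the $2$-edge-connected components; a leaf component attached by a single bridge cannot be a lone vertex (it would have valence $1$), so it contains a cycle, and the side of that component not containing the rest of the graph is a face of $G$ bounded only by non-bridges (if $G$ has no bridges at all, no face has a doubled edge). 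Hence for every planar trivalent type at least one generator $2e_i$ is missing, the realizable cone of a single combinatorial type is never the full positive orthant when $g=0$, $n\ge 5$, and the mechanism you invoke to dispose of all walls at once fails. What survives of your plan is a covering of the sorted orthant by the cones of many combinatorial types, region by region; you yourself flag this as ``the main obstacle,'' but that covering statement is essentially the whole theorem, so the argument is incomplete as written (the $(0,4)$ graph is only described loosely, and the $g\ge 1$ loop-attachment claim is asserted without a construction).

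The paper sidesteps the cone-covering problem with one uniform construction. Set $l_1=a_1$ and $l_j=a_j-l_{j-1}$, which are all positive because $l_{j-1}<a_{j-1}\le a_j$ (with a small $\delta$-perturbation and one extra chord when $a_1=a_2$); draw $n-3$ nested circles of lengths $l_1,\dots,l_{n-3}$ through a common point inside the disc of boundary length $l_{n-2}$ of the known three-face graph with residue vector $(l_{n-2},a_{n-1},a_n)$, so consecutive annuli have perimeters $l_{j-1}+l_j=a_j$; then slide the circles apart and break each $4$-valent vertex into two trivalent ones joined by a short segment of length $\varepsilon_i$, compensating lengths by $\pm 2\varepsilon_i$ so all perimeters are preserved. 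Positive genus is then obtained by induction, gluing onto the face of perimeter $a_1-4\epsilon$ a handle carrying two curves of length $\epsilon$, with separate explicit pictures for $n\le 3$. To salvage your route you would need either to produce, for each sorted target, a specific graph whose open cone contains it (which is what the nested-circle graph accomplishes), or to prove the covering property for the finite family of cones attached to each $(g,n)$; neither follows from the claim that all $2e_i$ can be made to appear as generators.
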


\begin{cor}
\label{cor:quasi}
Under all the assumptions of Theorem \ref{thm:SimpleZeros},  there exist a compact connected Riemann surface $X$ of genus $g$ and an spherical metric  on $X$ which has exactly $(4g-4+3n)$ conical singularities with the following angles
\[ 2\pi a_1,\,2\pi a_2,\,\cdots, 2\pi a_n,\, \underbrace{3\pi,\,3\pi,\cdots,3\pi}_{4g-4+2n}. \]
\end{cor}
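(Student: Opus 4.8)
The plan is to read the corollary off directly from Theorem \ref{thm:SimpleZeros} and Theorem \ref{thm:StrebelMetric}: the former produces a Strebel differential with exactly the prescribed combinatorial data, and the latter turns any Strebel differential into the developing map of a cone spherical metric representing its associated spherical divisor. The only genuine work is the bookkeeping of cone angles read off from the spherical divisor $D_q$.

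First I would invoke Theorem \ref{thm:SimpleZeros}. Under the standing hypotheses ($g\geq 0$, $n>0$, $2-2g-n<0$, $0<a_1\leq\cdots\leq a_n$, and $a_1+a_2\neq a_3$ when $(g,n)=(0,3)$), the residue vector $(a_1,\cdots,a_n)$ and the simple partition $(1,\cdots,1)$ of $(2n+4g-4)$ are compatible. By Definition \ref{def:res} this furnishes a compact connected Riemann surface $X$ of genus $g$ and a Strebel differential $q$ on $X$ whose residue vector is $(a_1,\cdots,a_n)$ and whose zero partition is $(1,\cdots,1)$. Concretely, $q$ has $n$ double poles $P_1,\cdots,P_n$ with residues $a_1,\cdots,a_n$ and exactly $\ell=2n+4g-4$ simple zeroes $Q_1,\cdots,Q_\ell$, each of order $m_k=1$.

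Next I would compute the associated spherical divisor. Substituting $m_k=1$ into the definition $D_q=\sum_{j=1}^n(a_j-1)P_j+\sum_{k=1}^\ell\frac{m_k}{2}Q_k$ gives
\[ D_q=\sum_{j=1}^n (a_j-1)\,P_j+\frac{1}{2}\sum_{k=1}^{2n+4g-4} Q_k. \]
Theorem \ref{thm:StrebelMetric} then guarantees that $F(z)=\exp\bigl(\int^z\sqrt{-q}\bigr)$ is a developing map of a cone spherical metric $ds^2$ on $X$ representing $D_q$. Matching coefficients against the general form $D=\sum(\theta_j-1)P_j$, at each $P_j$ we read $\theta_j=a_j$, hence cone angle $2\pi a_j$, while at each $Q_k$ we read $\theta_k-1=\frac{1}{2}$, i.e. $\theta_k=\frac{3}{2}$, hence cone angle $2\pi\cdot\frac{3}{2}=3\pi$.

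Finally I would tally the singular set: $ds^2$ has $n+(2n+4g-4)=4g-4+3n$ marked points, with angles $2\pi a_1,\cdots,2\pi a_n$ at $P_1,\cdots,P_n$ and $3\pi$ at each of the $4g-4+2n$ zeroes, exactly as claimed. As in Corollary \ref{cor:Strebel}, any $P_j$ with $a_j=1$ carries angle $2\pi$ and is therefore a marked smooth point rather than a genuine singularity, whereas all $4g-4+2n$ zeroes, having angle $3\pi\neq 2\pi$, are always genuine conical singularities. I expect no substantive obstacle here: the content is entirely carried by the two cited theorems, and the proof reduces to the elementary computation $\tfrac{1}{2}+1=\tfrac{3}{2}$ identifying the angle at each simple zero as $3\pi$.
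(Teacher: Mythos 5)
Your proposal is correct and follows the same route the paper intends: Corollary \ref{cor:quasi} is obtained exactly by combining Theorem \ref{thm:SimpleZeros} with Theorem \ref{thm:StrebelMetric}, reading the angles off the spherical divisor $D_q$ so that each simple zero contributes $\theta=1+\tfrac{1}{2}$, i.e. angle $3\pi$. Your angle bookkeeping and the count $n+(2n+4g-4)=4g-4+3n$ match the paper, as does your remark that points with $a_j=1$ are merely marked smooth points.
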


The organization of this manuscript is as follows. In Section $2$, for the convenience of readers, we review briefly the existence theorem of Strebel differentials and the correspondence between Strebel differentials and metric ribbon graphs, where we recall a well known example of Strebel differentials with three double poles on the Riemann sphere $\overline{{\Bbb C}}$. In Section \ref{sec:StrebelMetric}, we at first show the key observation, from which we then obtain the proof of  Theorem \ref{thm:StrebelMetric}. As an application of the theorem, we give an example of cone spherical metrics on the two elliptic curves with extra symmetry in this section. The proof of Theorem \ref{thm:SimpleZeros} occupies the whole of Section \ref{sec:SimpleZeros}. In order to prove the existence of such Strebel differentials, we only need to construct the corresponding trivalent metric ribbon graphs. Our strategy is to construct them on the two-shere at first and then on the surfaces of positive genus by attaching handles. We discuss in Section \ref{sec:sphere} the properties for the cone angles of cone spherical metrics on the Riemann sphere in Corollaries \ref{cor:Strebel} and \ref{cor:quasi} which are relevant to  \cite{MP1505,Er2017}.

\section{Strebel differentials and metric ribbon graphs}
\label{sec:Strebel}
In this section, we will recall in detail the existence theorem of Strebel differentials and the correspondence between metric ribbon graphs and Strebel differentials.

\begin{thm}
\label{thm:Strebel}
{\rm (\cite[Theorem 23.5]{Strebel})}
Given a compact connected Riemann surface $X$ of genus $g\geq 0$ with $n\geq 1$ marked points $P_1, P_2, \cdots, P_n$ such that $2 - 2g - n < 0$ and  $n \geq 1$ positive real numbers $a_1,\cdots, a_n$,  there exists a unique quadratic meromorphic differential $q$ on $X$ satisfying the following conditions:
  \begin{enumerate}
    \item $q$ has a double pole at each marked point $P_j$ with residue $a_j$ and it has no other poles.
    \item Every closed trajectory of $q$ circles around exactly one of the marked points.
    \item $q$ has no recurrent trajectory.
  \end{enumerate}
\end{thm}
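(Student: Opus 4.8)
The plan is to prove existence and uniqueness by the \emph{method of extremal metrics}, following the variational philosophy of Jenkins \cite{Jen78} and Strebel \cite{Strebel}. The starting point is to translate the three analytic conditions into a single geometric extremal problem. Conditions (2) and (3) say that the horizontal trajectories of the sought-after $q$ are all closed or critical and sweep out $n$ ring domains $R_1,\dots,R_n$, where $R_j$ is foliated by closed trajectories freely homotopic to a small loop enclosing only $P_j$. In the flat $q$-metric $|q|^{1/2}$ each $R_j$ is a straight cylinder, and the normalization at the double pole in condition (1) forces this cylinder to have circumference $2\pi a_j$ and to close up at $P_j$ with infinite modulus toward the puncture (a semi-infinite cylinder). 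Thus the task becomes: produce a system of disjoint embedded ring domains in the prescribed homotopy classes, together with a flat metric making each of them a cylinder of circumference $2\pi a_j$, that fit together into the $|q|^{1/2}$-metric of a global meromorphic quadratic differential.

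First I would set up the functional. On the space of all systems $\mathcal R=(R_1,\dots,R_n)$ of pairwise disjoint ring domains on $X\setminus\{P_1,\dots,P_n\}$, with $R_j$ in the homotopy class of a loop around $P_j$, I would consider
\[ \Phi(\mathcal R)=\sum_{j=1}^n a_j^2\,\mathrm{mod}(R_j), \]
where $\mathrm{mod}(R_j)$ denotes the conformal modulus; the weights $a_j^2$ are exactly what will force the correct residues at the end. The next step is to show that $\Phi$ is bounded above and that the supremum is attained. Boundedness follows from a length--area (Gr\"otzsch) estimate: measuring everything against a fixed background metric of finite area, disjointness of the $R_j$ prevents all moduli from blowing up simultaneously. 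For existence of a maximizer I would take a maximizing sequence, pass to a geometric limit of the ring-domain systems, and argue that the limit is still an admissible, non-degenerate system in the correct homotopy classes.

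Next I would identify the maximizer with a quadratic differential. The extremal metric realizing the modulus of each cylinder is flat and is the restriction of $|q|^{1/2}$ for a unique quadratic differential on that cylinder; the first-variation (Euler--Lagrange) condition for $\Phi$ at the maximizer says precisely that these local flat metrics glue across the common boundary critical graph into a single conformal metric that is $|q|^{1/2}$ for a globally meromorphic $q$. That no region is left uncovered, hence that no recurrent trajectory survives, is forced by maximality, since a recurrent set has positive measure and could otherwise be converted into extra modulus; this yields condition (3). Computing the behavior of $q$ at $P_j$ from the cylinder of circumference $2\pi a_j$ gives a double pole with residue $a_j$ and no other poles, which is condition (1), while each closed trajectory lying in exactly one $R_j$ is condition (2).

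Finally, uniqueness comes from the comparison inherent in the extremal problem. Given any competing differential $\tilde q$ satisfying (1)--(3), I would feed its flat metric $|\tilde q|^{1/2}$ into the length--area inequality, which for the extremal ring domain $R_j$ reads $\mathrm{mod}(R_j)\cdot \ell_j^2\le A_j$, where $\ell_j$ is the $|\tilde q|^{1/2}$-length of a shortest core curve of $R_j$ and $A_j$ its $|\tilde q|^{1/2}$-area; summing against the weights $a_j^2$ and comparing with the value of $\Phi$ realized by $q$ itself forces an equality in Gr\"otzsch's inequality, which holds only when the two flat metrics agree, whence $\tilde q=q$. I expect the main obstacle to be the technical cruxes of the third step together with the existence half of the second: controlling the degeneration of a maximizing sequence so that no puncture is lost and no two domains merge, and proving the regularity needed to conclude that the extremal configuration is genuinely induced by a \emph{meromorphic} quadratic differential rather than merely a measurable flat structure. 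These are exactly the points where Strebel's original argument \cite{Strebel} does the real work.
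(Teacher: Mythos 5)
First, a point of comparison: the paper does not prove this statement at all --- it is quoted verbatim from Strebel's book (Theorem 23.5 there) and used as a black box, so the only thing to measure your attempt against is Strebel's own argument, whose general philosophy (an extremal problem for weighted moduli, identification of the maximizer with a quadratic differential, uniqueness via a length--area comparison) you have correctly identified.

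There is, however, a genuine gap in your formulation of the extremal problem. Your functional $\Phi(\mathcal R)=\sum_j a_j^2\,\mathrm{mod}(R_j)$ is identically $+\infty$ on the admissible class: a ring domain in $X\setminus\{P_1,\dots,P_n\}$ freely homotopic to a small loop around $P_j$ may be taken to be a punctured-disc neighbourhood of $P_j$ itself, which has infinite modulus (or an annulus $\{\epsilon<|z_j|<1\}$, of modulus $\tfrac{1}{2\pi}\log(1/\epsilon)\to\infty$). The Gr\"otzsch length--area bound you invoke for boundedness fails exactly here, because the core curves around the punctures have zero extremal length; indeed you observe two sentences earlier that the extremal configuration must be a semi-infinite cylinder of infinite modulus at each $P_j$, which is incompatible with a finite supremum. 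The repair is standard but essential: fix local coordinates $z_j$ at the $P_j$ and replace $\mathrm{mod}(R_j)$ by the reduced modulus $\lim_{\epsilon\to0}\bigl(\mathrm{mod}(R_j\setminus\{|z_j|\le\epsilon\})-\tfrac{1}{2\pi}\log(1/\epsilon)\bigr)$ (a change of coordinate shifts this only by an additive constant, so the maximizer is unaffected), or else follow Strebel's actual route of first solving the analogous problem for ring domains of finite moduli and deducing the punctured-disc case by an exhaustion and limiting argument. The same renormalization is needed in your uniqueness step, where the inequality $\mathrm{mod}(R_j)\cdot\ell_j^2\le A_j$ is vacuous with all three quantities infinite; the length--area comparison must be run on the complement of shrinking coordinate discs so that the logarithmic divergences cancel. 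With that correction your outline matches the shape of Strebel's proof, though, as you acknowledge, the compactness of maximizing sequences and the regularity of the extremal metric are where the real work lies and are not carried out here.
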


We called such a differential $q$  a {\it Strebel differential} in the introduction. Consider a double pole $P_j$ of $q$. Then, by \cite[Theorem 7.2 and Fig. 9]{Strebel}, $q$ has form $ -\frac{a_{j}^2}{z^2}\, dz^{2}$, the trajectories of $q$ near $P_{j}$ are all closed. The union of these closed trajectories circling around $P_{j}$ and $\{P_{j}\}$ forms an open disk $D_j$ centered at $P_j$, whose boundary $\partial D_j$ consists of some critical trajectories of $q$ and has the length of $2\pi a_j$.  The last two conditions of Theorem \ref{thm:Strebel} are equivalent to
\begin{align}
\label{decomposition}
 X = \bigcup_j \overline{D}_j
\end{align}
Every critical trajectory of $q$ is bounded by zeroes of $q$.  There are $(m+2)$ half critical trajectories incident to a zero of order $m$ of $q$. In addition, the {\it critical graph} of $q$ is defined to be the connected graph, whose vertices coincide with the zeroes of $q$, and whose edges coincide with  the critical trajectories with lengths induced by the $q$-metric. In a nutshell, the Strebel differential $q$ on $X$ defines a metric ribbon graph $\Gamma=\Gamma(q)$ on the underlying oriented topological surface $\mathbb{X}$ such that
\begin{itemize}
  \item At each vertex of $\Gamma(q)$ there is a natural cyclic order on the set of half edges (half critical trajectories of $q$) incident to the vertex defined by the orientation of $\mathbb{X}$.
  \item each vertex of $\Gamma(q)$ has valency at least three.
  \item $\Gamma(q)$ decomposes $\mathbb{X}$ into the disjoint union of $n$ open topological discs.
\end{itemize}

\noindent On the other hand,  Harer \cite{Har86} and Mulase-Penkava \cite[Theorem 5.1]{Mul98} proved that given such a metric ribbon graph $\Gamma$ on an oriented compact topological surface $\mathbb{X}$, there exists a {\it unique} Riemann surface structure $X$ over $\mathbb{X}$ and a {\it unique} Strebel differential $q$ on $X$ such that $\Gamma$ coincides with the critical graph of $q$. Therefore, there exists a correspondence between Strebel differentials and metric ribbon graphs as above.

Note that for any metric ribbon graph on $\overline{\mathbb{C}} = \mathbb{C} \cup \{ \infty \}$,  we could view it as a planar metric graph with natural cyclic order at each vertex.
\begin{ex}
\label{ex:3Poles}
  Let $q$ be a quadratic differential on $\overline{\mathbb{C}}$ such that it has three double poles at $(0, 1, \infty)$ with residue vector $(a_{1},a_{2} , a_{3}) \in (\mathbb{R}_{>0})^3$. Then $q$ exists uniquely and
\[ q = - \bigg( \frac{a_1^2}{z^2} + \frac{a_2^2}{(z-1)^2} + \frac{a_3^2-a_1^2-a_2^2}{z(z-1)}\bigg) dz^{2}. \]
$q$ must be a Strebel differential by Theorem \ref{thm:Strebel}. Assume that $a_1 \leq a_2 \leq a_3$. It is well known that the metric ribbon graph of $q$ coincides with one of the following three graphs, where $b_{j} = 2\pi a_{j}, j = 1,2,3$.
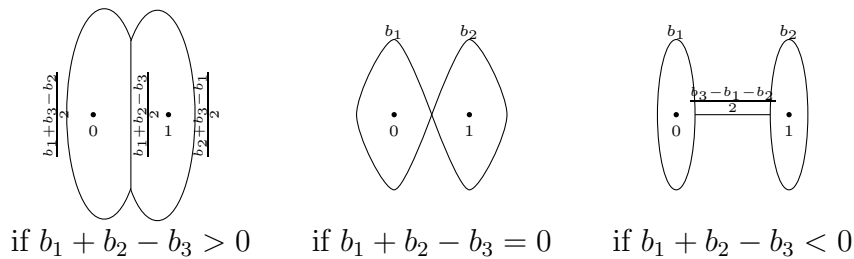
\begin{figure}[H]
  \begin{center}
    \begin{tikzpicture}
      \draw (-4,1)--(-4,-1);
      \draw (-4,1) arc (45:315:0.5cm and 1.4cm);
      \draw (-4,-1) arc (-135:135:0.5cm and 1.4cm);
      \fill (-4.5,0) circle (1pt) node[below] {\tiny{$0$}}
           (-3.5,0) circle (1pt) node[below] {\tiny{$1$}};
      \node[rotate=90] at (-3.8,0) {\tiny{$\frac{b_1+b_2-b_3}{2}$}};
      \node[rotate=90] at (-5,0) {\tiny{$\frac{b_1+b_3-b_2}{2}$}};
      \node[rotate=90] at (-3,0) {\tiny{$\frac{b_2+b_3-b_1}{2}$}};
      \node at (-4,-1.7) {if $b_1 + b_2 - b_3 > 0$};

      \draw plot[smooth] coordinates {(0,0) (-0.5,1) (-1,0) (-0.5,-1) (0,0)};
      \draw plot[smooth] coordinates {(0,0) (0.5,1) (1,0) (0.5,-1) (0,0)};
      \fill (-0.5,0) circle (1pt) node[below] {\tiny{$0$}}
           (0.5,0) circle (1pt) node[below] {\tiny{$1$}};
      \node at (-0.5,1.1) {\tiny{$b_{1}$}};
      \node at (0.5,1.1) {\tiny{$b_{2}$}};
      \node at (0,-1.7) {if $b_{1} + b_{2} - b_{3} = 0$};

      \draw (3.5,0)--(4.5,0);
      \draw (4.75,0) ellipse (0.25 and 1)
            (3.25,0) ellipse (0.25 and 1);
      \fill (3.25,0) circle (1pt) node[below] {\tiny{$0$}}
           (4.75,0) circle (1pt) node[below] {\tiny{$1$}};
      \node at (3.25,1.1) {\tiny{$b_{1}$}};
      \node at (4.75,1.1) {\tiny{$b_{2}$}};
      \node at (4,0.2) {\tiny{$\frac{b_{3}-b_{1}-b_{2}}{2}$}};
      \node at (4,-1.7) {if $b_1 + b_2 - b_3  < 0$};
    \end{tikzpicture}
  \end{center}
\caption{Metric ribbon graphs with $3$ double poles on $\overline{\Bbb C}$.}
\label{fig:3Poles}
\end{figure}

By Theorem \ref{thm:StrebelMetric}, we could construct from the above graphs some cone spherical metrics as follows. The Strebel differentials in the second case are degenerate so that they give a one-parameter family of reducible spherical metrics with the four cone angles of $2\pi a_1,\,2\pi a_2, 2\pi(a_1+a_2)$ and $4\pi$,  which was mentioned in Remark \ref{rem:red}. Those in both the first and third cases give new examples of cone spherical metrics with the five cone angles of $2\pi a_1,\,2\pi a_2, 2\pi a_3,\,3\pi,\,3\pi$, which are irreducible in general. Moreover,  ${\rm Crit}(q)$ coincides with the set of conical singularities of these metrics.
\end{ex}

\section{Proof of Theorem \ref{thm:StrebelMetric}}
\label{sec:StrebelMetric}
The strategy of the proof is to show that the projective function $F(z)=\exp\big(\int^z \sqrt{-q}\big)$ on $X\setminus {\rm Crit}(q)$ has monodromy in ${\rm U}(1)\rtimes {\Bbb Z}_2\subset {\rm PSU}(2)$, which is reduced to the following

\begin{lem}
\label{lem:RealPeriod}
{\rm \bf (The key observation)}  $q$  has real periods.
\end{lem}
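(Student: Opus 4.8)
The plan is to verify Definition \ref{def:real} directly: I must show that the globally defined meromorphic $1$-form $\omega$ on the spectral cover $\hat{X}$, characterized by $\pi^*q=\omega\otimes\omega$, satisfies $\int_\gamma\omega\in{\Bbb R}$ for every loop $\gamma$ in $\hat{X}$ punctured by the simple poles of $\omega$. The first homology of this punctured surface is generated by two kinds of classes --- small loops encircling the punctures, and cycles carrying the genus and the combinatorics of the critical graph --- and I would treat them in turn.

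For the puncture loops I would work in a local coordinate $z$ centered at a double pole $P_j$ of $q$, where $q=-\bigl(a_j^2 z^{-2}+\cdots\bigr)\,dz^2$. Since a double pole has even order it is not a branch point of $\pi$, so $z$ lifts and $\omega=\pm i\,a_j z^{-1}(1+\cdots)\,dz$ there; hence ${\rm Res}\,\omega=\pm i\,a_j$ is purely imaginary and $\int_{\text{small loop}}\omega=2\pi i\cdot(\pm i\,a_j)=\mp 2\pi a_j\in{\Bbb R}$. Equivalently, such a loop projects to a closed trajectory circling $P_j$, whose $q$-length is exactly $2\pi a_j$.

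For the remaining classes the key point is that $X\setminus\{P_1,\dots,P_n\}$ deformation retracts onto the critical graph $\Gamma(q)$: by the decomposition \eqref{decomposition} each punctured disc $D_j\setminus\{P_j\}$ retracts onto its boundary, which is a union of critical trajectories lying in $\Gamma$. Thus every loop in $X\setminus{\rm Crit}(q)$ is homotopic to an edge-path in $\Gamma$, and correspondingly every cycle on $\hat{X}$ may be taken to project to such an edge-path. Now each edge of $\Gamma$ is a critical \emph{horizontal} trajectory, so in the natural parameter $w=\int\sqrt{q}$ one has $q=dw^2$ with $w$ moving in the real direction along the edge; therefore $\omega=\sqrt{q}=dw$ restricts along each edge to $\pm$ its arc-length element, and $\int_{\text{edge}}\omega=\pm(\text{length of the edge})\in{\Bbb R}$. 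Summing over the finitely many edges traversed, any period $\int_\gamma\omega$ becomes a signed sum of real edge-lengths and is therefore real.

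The main obstacle I anticipate is the bookkeeping for the double cover $\pi:\hat{X}\to X$ and its branch points, which are exactly the odd-order zeros of $q$ and sit at vertices of $\Gamma$. A loop in $\Gamma$ may reverse the branch of $\sqrt{q}$ --- equivalently, switch sheets of $\hat{X}$ --- as it passes such a vertex; I would argue that this reversal only flips the sign of the corresponding edge's contribution, which affects neither the reality of each term nor the reality of the sum. One must also check that the homotopy into $\Gamma$ lifts compatibly across these branch vertices, and that the sign-reversing loops are precisely those realizing the ${\Bbb Z}_2$ part of the monodromy in ${\Bbb C}^*\rtimes{\Bbb Z}_2$, so that reality of the periods of $\omega$ is exactly what confines the ${\Bbb C}^*$-monodromy of $F$ to ${\rm U}(1)$, as needed for Theorem \ref{thm:StrebelMetric}.
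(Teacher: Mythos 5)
Your proof rests on the same mechanism as the paper's: every period of $\omega$ is a signed sum of $q$-lengths of critical horizontal trajectories, hence real, and your residue computation at the double poles is correct (and consistent with $\partial D_j$ having $q$-length $2\pi a_j$). The genuine difference is where the topological reduction is performed, and that is also where your plan still has a hole. You retract $X\setminus\{P_1,\dots,P_n\}$ onto $\Gamma(q)$ downstairs and then want to lift edge-paths to $\hat X$; but the odd-order zeros of $q$ are simultaneously vertices of $\Gamma(q)$ and branch points of $\pi$, so $\pi$ is not a covering over the target of your retraction, and the lifting you invoke is exactly the step you leave as ``one must also check.'' The paper sidesteps this entirely by working upstairs from the start: most of its proof is devoted to a Claim that $\pi^*q=\omega\otimes\omega$ is itself a Strebel differential on $\hat X$ (no recurrent trajectories because $\pi$ is a degree-two local isometry so lengths at most double, and every closed trajectory circles exactly one simple pole of $\omega$, via the homotopy lifting property applied to the shrinking family $\gamma_t$ of closed trajectories of $q$); the decomposition \eqref{decomposition} then holds on $\hat X$ and shows that $\hat X$ minus the poles of $\omega$ is homotopy equivalent to the critical graph of $\omega$, whose edges are horizontal, so all periods are real with no branch-point bookkeeping. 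If you prefer to finish your downstairs version, the fix is to retract onto $\Gamma(q)$ with each odd-order vertex replaced by a small link circle: such a link lifts to a single loop around a zero of $\omega$, over which $\int\omega=0$ since $\omega$ is holomorphic there, while the truncated edges contribute real numbers and the sheet exchange only flips signs, as you say. So your route is workable, but the step you defer is the actual content of the lemma, and the paper's Claim is precisely how the authors discharge it.
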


\begin{proof}
Using the Riemann existence theorem \cite[Theorem 2, p.49]{Don2011}, we rewrite the construction of the canonical (branched) double cover $\pi:{\hat X}\to X$ induced by the pair $(X,\, q)$ (\cite[Section 2]{Lann2004}) in a global point of view. Denote by $X^{\rm ev}$ the surface of $X$ punctured by the critical points of odd orders of $q$. We look at the monodromy representation of the multi-valued one-form $\sqrt{q}$ on the punctured surface $X^{\rm ev}$ as a permutation representation
\[ \rho=\rho_q:\,\pi_1(X^{\rm ev})\to S_2. \]
For example, we obtain another different branch of $\sqrt{q}$ after doing the analytic continuation of a given branch of $\sqrt{q}$ along a small simple closed curve $\gamma$ around a critical point of odd order of $q$. That is, the image of the homotopy class $[\gamma]$ of $\gamma$ under $\rho$ is non-trivial. However, the non-triviality of $\rho([\Gamma])$ depends on $q$ itself, where $\Gamma$ is a loop lying in $X^{\rm ev}$ and representing a non-trivial element in $\pi_1(X)$. The representation $\rho:\,\pi_1(X^{\rm ev})\to S_2$ is non-trivial if and only if $q$ is non-degenerate. By the Riemann existence theorem, we obtain from $\rho$ the canonical double cover $\pi:{\hat X}\to X$, branched over critical points of odd orders of $q$, such that $\pi^*q=\omega\otimes \omega$, where $\omega$ is a meromorphic one-form on ${\hat X}$. The unique pre-image of a zero of odd order $k$ of $q$ is a zero of order $(k+1)$ of the one-form $\omega$, and the two pre-images of a critical point of even order $k$ are critical points of order $k/2$ of $\omega$. (Actually, by \cite[Theorem 6.1]{Strebel} we have $q=w^k\,dw^2$ in a suitable complex coordinate $w$ centered at a zero $Q$ of odd order $k$. Since $w=\pi(z)=z^2$ near $\pi^{-1}(Q)$ in a suitable complex coordinate $z$ centered at $\pi^{-1}(Q)$, we have $\pi^*\, q=\big(z^2\big)^k\,d(z^2)\otimes d(z^2)=\big(2z^{k+1}\, dz\big)\otimes \big(2z^{k+1}\, dz\big)$ and $\omega=\pm 2z^{k+1}dz$  near $\pi^{-1}(Q)$, which gives the first statement. The latter follows similarly.) In particular, $\omega$ has only simple poles. Moreover, ${\hat X}$ is connected if and only if $q$ is non-degenerate; and $\pi:{\hat X}\to X$ is an un-ramified covering if and only if $q$ has no critical point of odd order.

Since $q$ is a Strebel differential, we could make the following \\

\noindent {\sc Claim} {\it $\pi^*q=\omega\otimes \omega$ is a Strebel differential on ${\hat X}$.  As ${\hat X}$ has two connected components, here we mean  that $\pi^*q$ is a Strebel differential on each component of ${\hat X}$.}

\noindent {\sc Proof of the claim}\quad Denoting ${\hat X}^{\rm ev} = \pi^{-1}(X^{\rm ev})$, we could see that $\pi: {\hat X}^{\rm ev}\to X^{\rm ev}$ is a covering space of degree two.

Then we show that {\it $\pi^*q=\omega\otimes \omega$ is a JS differential, i.e. it has no recurrent trajectory.} Denote by $L$ the maximal value of lengths of horizontal trajectories of $q$. Take an open horizontal geodesic $\xi=\xi(t)$ such that the closure of $\xi$ lies in ${\hat X}\setminus {\rm Crit}(\omega)\subset {\hat X}^{\rm ev}$. Then the image of $\gamma(t)=\pi\circ\xi(t)$ is also a horizontal geodesic segment for the $q$-metric whose closure lies in $X\setminus {\rm Crit}(q)\subset X^{\rm ev}$ and whose length is at most $L$. Since $\pi: {\hat X}^{\rm ev}\to X^{\rm ev}$ is both a degree two covering and a local isometry, the length of $\xi$ is at most $2L$. Therefore, $\pi^*q$ has no recurrent trajectory since each such trajectory has infinite length.

At last, we show that {\it each closed horizontal trajectory of $\pi^*q=\omega\otimes \omega$ circles around exactly one simple pole of $\omega$}. Choose a closed horizontal trajectory $\xi$ of $\pi^*q=\omega\otimes \omega$. Then $\xi$ lies in ${\hat X}\setminus {\rm Crit}(\omega)$. Then the image $\gamma$ of $\xi$ by $\pi$ is a closed horizontal trajectory of $q$, and $\pi:\xi\to\gamma$ forms a covering space of degree $d\leq 2$. Since $q$ is a Strebel differential, there exists a family of closed horizontal trajectories $\gamma_t$ of $q$, $t\in [0,\,\infty)$, where $\gamma=\gamma_0$ and each $\gamma_t$ circles around exactly one double pole of $q$, say $P$. Denote by ${\hat P}_1$ and ${\hat P}_2$ the two pre-images of $P$ under $\pi:{\hat X}\to X$, which are two simple poles of $\omega$. Since $\xi$ is a lifting of $\gamma$ and $\gamma_t$ is a homotopy, by the homotopy lifting property, there exists a unique homotopy $\xi_t$ such that $\xi=\xi_0$, each $\xi_t$ is a lifting of $\gamma_t$ under the covering $\pi: {\hat X}^{\rm ev}\to X^{\rm ev}$, and $\xi_t$ forms a family of closed horizontal trajectories of $\pi^*q$. By the continuity, $\pi:\xi_t\to\gamma_t$ is also a covering of degree $d$ for each $t$. Recall that $P\in X^{\rm ev}$ and $\{ {\hat P}_1,\,{\hat P}_2\}\subset {\hat X}^{\rm ev}$. Since $\gamma_t$ goes to $P$ as $t\to\infty$ on the underlying topological surface ${\Bbb X}$ of $X$, using the homotopy lifting property again, we find that $d=1$ and $\xi_t$ goes to exactly one of ${\hat P}_1$ and ${\hat P}_2$.  Hence, we complete the proof of the claim. \\

Finally, we show that {\it $\omega$ has real periods.} Since $\omega\otimes \omega$ is a Strebel differential,  we know from \eqref{decomposition} that  $\omega$ gives a cellular decomposition of ${\hat X}$. Speaking in details, the 0-cells coincide with the zeroes of $\omega$,  the 1-cells are the critical horizontal trajectories of $\omega$, and the 2-cells are the topological discs centered at poles of $\omega$.  We look at the critical graph $G$ of $\omega$ as a embedded  graph on $X$. Since $\omega$ is a one-form, its critical graph $G$ has a natural orientation as in \cite[Section 4.1]{KZ2003}, where this oriented graph $G$  was called the {\it separatrix diagram} of $\omega$. Then the two topological spaces $X \setminus \{\text{poles of\ }\omega\}$ and $G$ have the same homotopy type. Since each edge of $G$ is a critical horizonal trajectory, the integral of $\omega$ over it gives a real number. By the homotopy equivalence between $X \setminus \{\text{poles of\ }\omega\}$ and $G$,  the integral of $\omega$ over a loop in $X \setminus \{\text{poles of\ }\omega\}$ equals that of $\omega$ over another loop in $G$, which is a real number.
\end{proof}

\nd {\bf Proof of Theorem \ref{thm:StrebelMetric}.}
By Lemma \ref{lem:RealPeriod}, $\omega$ has real periods on ${\hat X}$.  The projective function $f(z)=\exp\,\big(\int^z\,\sqrt{-1}\omega\big)$ on the surface of ${\hat X}$ punctured by the simple poles of $\omega$ has periods in ${\rm U}(1)$. Moreover, $f(z)$ extends to the simple poles of $\omega$ at which the \qd\ $\omega\otimes \omega$ have residues in $\mathbb{Z}$. Since $f(z)=F\circ\pi(z)$, the projective function $F(z)$ defined on $X\setminus D_q$ has monodromy in ${\rm U}(1)\rtimes {\Bbb Z}_2$ so that it becomes a developing map of a spherical metric on  $X\setminus D_q$.

At last we prove that {\it $F$ is compatible with $D_q$}. Choose a point $P$ in ${\rm Supp}\, D_q$.

\nd $\bullet$ Suppose that $P$ is a double pole of $q$ with residue $a>0$. Then there exists a complex coordinate chart $(U,\,z)$ centered at $P$ where $\sqrt{-q}(z)=a\,\frac{dz}{z}$. Hence, a branch of $F(z)$ equals $z^a$ up to a M{\" o}bius transformation in each sufficiently small disc near $P$ but not containing $P$. Hence the Schwarzian derivative $\{F,\,z\}$ of $F$ equals $\frac{1-a^2}{2z^2}$ near $P$.

\nd $\bullet$ Suppose that $P$ is a zero of order $k$ of $q$. As $k={\rm even}$, using the similar argument as above, we can show that there exists a complex coordinate chart $(U,\,z)$ centered at $P$, where a branch of $F(z)$ equals $z^{1+k/2}$ up to a M{\" o}bius transformation. As $k={\rm odd}$, we can also show that a branch of $F(z)$ equals $z^{1+k/2}$ up to a M{\" o}bius transformation in each sufficiently small disc near $P$ but not containing $P$.
$\hfill{\square}$\\

\nd {\bf Proof of Corollary \ref{cor:Strebel}.}  It follows from a combination of  Theorem \ref{thm:StrebelMetric} and Theorem \ref{thm:Strebel}. In particular, it follows from the preceding paragraph  that the $\ell$ extra conical singularities $Q_1,\cdots, Q_\ell$ coincide with the zeroes of the unique Strebel differential $q$ in Theorem  \ref{thm:Strebel}. $\hfill{\square}$\\

\begin{ex}
\label{ex:torus}
Denote by $E_\tau$ the elliptic curve $\frac{\Bbb C}{{\Bbb Z}\oplus {\Bbb Z}\tau}$ of modulus $\tau$ with $\Im\,\tau>0$.  We shall give two cone spherical metrics on the two special elliptic curves of moduli $\sqrt{-1}$ and $e^{\sqrt{-1}\pi/3}$, which have conical angles of $(2\pi\,a,\,4\pi)$ and $(2\pi\, a,\, 3\pi,\,3\pi)$ ($a>0$), respectively. Recall the {\it Weierstrass elliptic function}
\[ \wp(z)=\frac{1}{z^2}+\sum_{(0,0)\not=(m,n)\in {\Bbb Z}^2}\,\bigg(\frac{1}{(z-m-n\tau)^2}-\frac{1}{(m+n\tau)^2}\bigg)=\frac{1}{z^2}+\frac{g_2 z^2}{20}
+\cdots, \]
where $g_2=\sum_{(0,0)\not=(m,n)\in {\Bbb Z}^2}\,\frac{60}{(m+n\tau)^4}$. Let $\tau$ be $\sqrt{-1}$ or $e^{\sqrt{-1}\pi/3}$. Mulase-Penkava \cite[Example 4.5]{Mul98} used $\wp(z)$ to construct two Strebel differentials on the two elliptic curves of $E_\tau$'s.  Precisely speaking, they showed that $q=-\wp(z)\,dz^2$ are Strebel differentials on $E_\tau$'s, and it has a double zero at $(1+\tau)/2$ as $\tau=\sqrt{-1}$ and has the two simple zeroes of $(1+\tau)/3$ and $(2+2\tau)/3$ as $\tau=e^{\sqrt{-1}\pi/3}$. The critical graphs of these two Strebel differentials were drawn in \cite[Figures 4.6, 4.8]{Mul98}. Appying Theorem \ref{thm:StrebelMetric} to a positive multiple of $q$, we obtain two cone spherical metrics: the one on $E_{\sqrt{-1}}$ has conical singularities at $0$ and $(1+\sqrt{-1})/2$ with angles $2\pi\,a$ and $4\pi$, respectively, the other on $E_{e^{\sqrt{-1}\pi/3}}$ has conical singularities at $0$, $\big(1+e^{\sqrt{-1}\pi/3}\big)/3$ and $\big(2+2e^{\sqrt{-1}\pi/3}\big)/3$ with angles $2\pi\,a$, $3\pi$ and $3\pi$, respectively. These two cone sphercial metrics show that the existence result in Corollary \ref{cor:quasi} does depend on the moduli of Riemann surfaces.

The existence of the above cone spherical metric with angles $2\pi\,a$ and $4\pi$ on $E_{\sqrt{-1}}$ might be compared with a recent result by Z.-J. Chen and C.-S. Lin \cite{CL2017} that there exists no cone spherical metric with one conical singularity of angle in $\{6\pi,\, 10\pi,\, 14\pi,\,\cdots\}$ on each elliptic curve with modulus being purely imaginary, the two special cases $6\pi$ and $10\pi$ of which had been verified in \cite{CLW14} and \cite{CKL2016}, respectively.
\end{ex}

\section{Proof of Theorem \ref{thm:SimpleZeros}}
\label{sec:SimpleZeros}
\begin{defn}
\label{defn:tri}
{\rm We call a graph {\it trivalent} if and only if each vertex of the graph is incident to three half edges. For example, both the left graph and the right one in Example \ref{ex:3Poles} are trivalent.}
\end{defn}

By the correspondence between metric ribbon graphs and Strebel differentials \cite[Theorem 5.1]{Mul98}, it suffices to construct the corresponding trivalent metric ribbon graphs on a compact connected oriented topological surface of genus $g$. We shall divide our construction into two cases of $g=0$ and $g>0$. Case $g=0$ of the theorem is implied by Example \ref{ex:3Poles} and the following

\begin{lem}
 \label{Lem:SimpleZero}
    For $n>3$ and an arbitrary real vector $\alpha = (a_1, a_2, \cdots, a_n) \in (\mathbb{R}_{>0})^n$, there exists a Strebel differential $q$ on the Riemann sphere which has $n$ double poles with residue vector $\alpha$ and $2n-4$ simple zeroes.
\end{lem}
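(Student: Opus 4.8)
The plan is to translate the statement, via the correspondence between Strebel differentials and metric ribbon graphs \cite[Theorem 5.1]{Mul98}, into a purely combinatorial-metric problem: it suffices to build a connected \emph{trivalent} metric ribbon graph $\Gamma$ on the sphere $S^2$ which decomposes $S^2$ into exactly $n$ discs and whose $j$-th disc has boundary of length $2\pi a_j$. Indeed, by \eqref{decomposition} the face bounding a double pole of residue $a_j$ has perimeter $2\pi a_j$, while a zero of order $m$ corresponds to a vertex of valency $m+2$; thus trivalence is exactly the condition that all zeroes be simple. The number of simple zeroes is then forced: if $\Gamma$ has $V$ vertices and $E$ edges, trivalence gives $3V=2E$, and $V-E+n=\chi(S^2)=2$ yields $V=2n-4$ and $E=3n-6$. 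Hence once such a $\Gamma$ is constructed, its associated Strebel differential automatically has $n$ double poles with residue vector $\alpha$ and exactly $2n-4$ simple zeroes.

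First I would set up an induction on $n$, the base case $n=3$ being supplied by Example \ref{ex:3Poles}: for any positive triple not in the degenerate case $a_1+a_2=a_3$ (after reordering) the critical graph is either the theta graph or the dumbbell, both of which are trivalent. The inductive step rests on a single face-splitting move. Given a trivalent metric ribbon graph and one of its faces $F$, choose two points $v_1,v_2$ in the interiors of boundary edges of $F$, cutting $\partial F$ into two arcs of lengths $s$ and $\mathrm{perim}(F)-s$, and join $v_1$ to $v_2$ by a new edge (a \emph{bridge}) of length $t>0$ running through the disc $F$. Subdividing the two edges at $v_1,v_2$ and adding the bridge sends $V\mapsto V+2$ and $E\mapsto E+3$, increases the number of faces by one, preserves trivalence, and splits $F$ into two discs of perimeters $s+t$ and $(\mathrm{perim}(F)-s)+t$. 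Crucially, subdivision leaves the perimeter of every neighbouring face unchanged, so the move alters only the two perimeters coming from $F$.

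With this move in hand, to realize an arbitrary $(a_1,\dots,a_n)$ with $n>3$ I would first apply the induction hypothesis to the positive $(n-1)$-vector $(a_1,\dots,a_{n-2},\,a_{n-1}+a_n-t/\pi)$ for a small generic $t>0$, obtaining a trivalent metric ribbon graph whose last face $F$ has perimeter $2\pi(a_{n-1}+a_n)-2t$. Splitting $F$ with a bridge of length $t$ and cutting $\partial F$ into arcs of lengths $s=2\pi a_n-t$ and $\mathrm{perim}(F)-s=2\pi a_{n-1}-t$, placed generically along $\partial F$ so that $v_1,v_2$ avoid existing vertices, produces two faces of perimeters $2\pi a_n$ and $2\pi a_{n-1}$, hence a trivalent metric ribbon graph realizing $(a_1,\dots,a_n)$. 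For $t$ sufficiently small both arc lengths are positive, so the bridge can be drawn, and the result stays connected (a bridge joins two boundary points of a disc) and cellular, so it is a genuine metric ribbon graph. When the recursion finally reaches $n=3$ one only needs the resulting triple to avoid the single degenerate value, which is arranged by choosing the auxiliary lengths $t$ generically.

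The step I expect to be the main obstacle is guaranteeing positivity of all edge lengths simultaneously while prescribing arbitrary perimeters — the same phenomenon that forces the switch between the theta graph and the dumbbell in Example \ref{ex:3Poles}. In the inductive scheme this difficulty is localized to the choice of the bridge length $t$ and the cut at each splitting, and it dissolves once $t$ is taken small; the only genuinely excluded configuration, the degenerate triple $a_1+a_2=a_3$, is avoided by genericity, exactly as reflected in the hypothesis $a_1+a_2\neq a_3$ of Theorem \ref{thm:SimpleZeros} for $(g,n)=(0,3)$. A secondary point to verify carefully is that each intermediate object still decomposes $S^2$ into discs and remains connected after every subdivision and bridge insertion, which holds because each face is a disc and the bridge is a chord of that disc.
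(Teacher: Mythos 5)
Your proof is correct, but it follows a genuinely different route from the paper's. The paper makes the same reduction to constructing a trivalent metric ribbon graph on $\mathbb{S}^2$ with prescribed residue vector, but then builds the graph explicitly in two steps: first a (generally non-trivalent) graph $G_1$ consisting of nested circles through a common point $p$ placed inside one disc of a three-pole graph $G_0$ from Example \ref{ex:3Poles}, with circumferences given by the telescoping sums $l_1=a_1,\ l_2=a_2-a_1,\ l_3=a_3-a_2+a_1,\dots$ (and a separate $\delta$-perturbation when $a_1=a_2$); and second an $\varepsilon$-surgery that breaks each valency-four vertex into two trivalent vertices joined by a short segment, readjusting edge lengths so that the residue vector is unchanged (Figures \ref{fig:Surgery} and \ref{fig:induction}). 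Your induction on the number of faces via a single face-splitting move --- subdivide two boundary edges of a face and join the two new trivalent vertices by a chord of length $t$ --- achieves the same end more economically: trivalence is automatic at every stage, the perimeter bookkeeping reduces to the identity $(s+t)+\bigl((P-s)+t\bigr)=P+2t$, and all positivity and degeneracy issues are concentrated in the generic choice of one small parameter $t$ per step, which also steers the terminal triple away from the excluded case $a_1+a_2=a_3$ of Example \ref{ex:3Poles}. The one point you should make explicit is that when the boundary walk of the chosen face traverses an edge twice, the cut points $v_1,v_2$ must be chosen distinct as points of the surface, not merely distinct from the existing vertices; otherwise the bridge degenerates to a loop and produces a valency-four vertex. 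This is again a generic condition on the placement of the cut. What the paper's heavier construction buys is a completely explicit planar picture of the resulting graph, which is then reused as the genus-zero input to the adding-handles argument of Lemma \ref{Lem:HighGenus}; your inductive construction would serve that purpose equally well.
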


\begin{proof} It suffices to construct a trivalent connected metric graph $\Gamma$ on the two-sphere ${\Bbb S}^2$ such that

\nd $\bullet$ $\Gamma$ decomposes ${\Bbb S}^2$ into $n$ topological discs, say $D_1, D_2, \cdots$ and $D_n$;

\nd $\bullet$ the boundary $\partial D_j$ of $D_j$ has length equal to $a_j$ for each $1\leq j\leq n$.

\nd Let $G$ be a connected metric ribbon graph on ${\Bbb S}^2$  which decomposes  ${\Bbb S}^2$ into several discs. We call the vector formed by the lengths of the boundaries of these discs the {\it residue vector} of $G$. Note that if  $\alpha$ is the residue vector of $G$,  then the residue vector of the Strebel differential corresponding to $G$ is $\frac{\alpha}{2\pi}$. We may assume that $a_1\leq a_2\leq \cdots \leq a_n$ without loss of generality and decompose the construction of $\Gamma$ into the following two steps.\\

\nd {\bf Step 1.}  {\it We construct a metric ribbon graph $G_1$ with residue vector $(a_1,\cdots, a_n)$. However, $G_1$ is not necessarily trivalent.}
       \begin{itemize}
      \item[Case 1.] Assume $a_1 < a_2$. Then we define
      $l_1:=a_1>0,\,
        l_2:=a_2-a_1>0,\,
        l_3:=a_3-a_2+a_1> 0, \,
            \cdots,\,
       l_{n-2}:=a_{n-2}-a_{n-3}+ \cdots+(-1)^{n-1}a_1>0.$
    Since $l_{n-2} < a_{n-1} \leq a_n$, there exists a metric graph $G_0$  with residue vector $(l_{n-2}, a_{n-1}, a_n)$ by Example \ref{ex:3Poles}. Recall that in the example there are three graphs. $G_0$ might be any one of them according to the size relationship between the two quantities $l_{n-2}+a_{n-1}$ and $a_n$. In detail,  if $l_{n-2} + a_{n-1} = a_{n}$, then $G_{0}$ has only one vertex with valency $4$ and coincide with the middle graph in Example \ref{ex:3Poles}. Otherwise, $G_{0}$ is a trivalent graph corresponding to  either the left graph or the right one in the example. We embed $G_0$ into the plane ${\Bbb R}^2\subset{\Bbb S}^2$ so that $G_0$ decomposes ${\Bbb R}^2$ into a punctured disc and two discs. We take one of them, say $D$, such that its boundary $\partial D$ is a circle of length $l_{n-2}$. Taking a point $p$ on $\partial D$ other than any vertices of $G_0$, we could draw $n-3$ circles passing through $p$ such that the former circle contains the latter one in $D$ with the lengths $l_{n-3}, \cdots, l_2, l_1$, respectively, as the left picture in Figure \ref{fig:Construct}. Then we obtain a planar metric ribbon graph $G_1$ whose residue vector coincides with $(a_1,\cdots, a_n)$. Moreover, $G_0$ is a subgraph of $G_1$.

    \item[Case 2] Assume $a_1=a_2$. Choosing a sufficiently small positive number $\delta>0$, we draw at first the smallest circle with the circumference of $l_2=a_2 - 2\delta=a_1-2\delta$ and then take an edge of length $l_1=\frac{a_1}{2} + \delta$ which divides the circle equally. Then in the circle we obtain two new topological discs, the boundary of each of which has length $a_{1}$. Defining
    $
        l_3:= a_3 - a_2 + 2\delta>0,\,
        l_4:= a_4 - a_3 + a_2 - 2\delta>0,\,
            \cdots,$
    \[ l_{n-2}:=a_{n-2}-a_{n-3}+\cdots+(-1)^{n-2}a_2+(-1)^{n-1}\,2\delta>0, \]
     we find that $l_{j} + l_{j-1} = a_{j}$ for $3 \leq j \leq n-2$. By a similar construction as Case 1 as the right picture in Figure \ref{fig:Construct}, we obtain a planar metric ribbon graph $G_1$ with residue vector
       $(a_1,\cdots, a_n)$ and $G_0$ is a subgraph of it.
    \end{itemize}
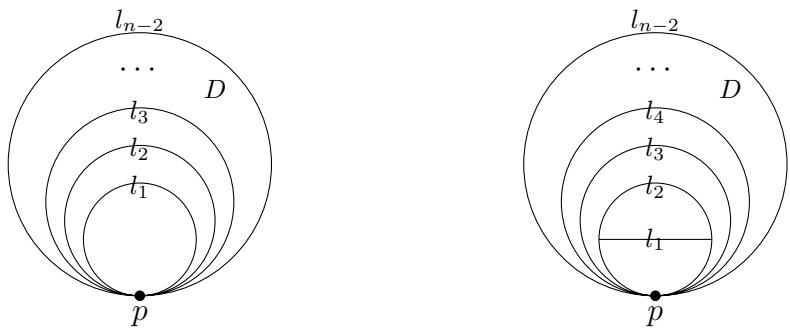
\begin{figure}[H]
\centering
\begin{minipage}[b]{.5\textwidth}
\centering
\begin{tikzpicture}
  \draw (0,0) circle (0.75)
            (0,0.25) circle (1)
            (0,0.5) circle (1.25)
            (0,1) circle (1.75);
  \node at (0,2.25) {$\cdots$};
  \node at (0, 0.7) {\footnotesize{$l_1$}};
  \node at (0, 1.2) {\footnotesize{$l_2$}};
  \node at (0, 1.7) {\footnotesize{$l_3$}};
  \node at (0, 2.9) {\footnotesize{$l_{n-2}$}};
  \node at (1, 2) {\footnotesize{$D$}};
  \fill (0, -0.75) circle (2pt) node[below] {$p$};
\end{tikzpicture}
\end{minipage}%
\begin{minipage}[b]{.5\textwidth}
\centering
\begin{tikzpicture}
  \draw (0,0) circle (0.75)
            (-0.75,0)--(0.75,0)
            (0,0.25) circle (1)
            (0,0.5) circle (1.25)
            (0,1) circle (1.75);
  \node at (0,2.25) {$\cdots$};
  \node at (0, 0) {\footnotesize{$l_1$}};
  \node at (0, 0.7) {\footnotesize{$l_2$}};
  \node at (0, 1.2) {\footnotesize{$l_3$}};
  \node at (0, 1.7) {\footnotesize{$l_4$}};
  \node at (0, 2.9) {\footnotesize{$l_{n-2}$}};
  \node at (1, 2) {\footnotesize{$D$}};
  \fill (0, -0.75) circle (2pt) node[below] {$p$};
\end{tikzpicture}
\end{minipage}%
\caption{The part inside $D$ of the two graphs (Case 1 left and Case 2 right).}
\label{fig:Construct}
\end{figure}

\nd {\bf Step 2.} {\it We modify the graph $G_1$ in Step 1 to  a trivalent graph, say $\Gamma$,  without change the residue vector.} Denote by $E_1, E_2, \cdots, E_{n-2}$  the edges (circles) of lengths  $l_1, l_2, \cdots, l_{n-2}$ in Figure \ref{fig:Construct}, respectively. Then we move $E_i$ along $E_{i+1}$ such that each vertex inside $D$ have valency at most $4$ and keep  the residue vector invariant as Figure \ref{fig:Breaking}. Denote by $G_2$ the new graph constructed from $G_1$ via this procedure.
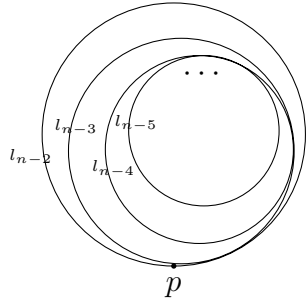
\begin{figure}[H]
  \centering
  \begin{tikzpicture}
     \draw (0,0) circle (1.75)
               (0.1,-0.22) circle (1.5)
               (0.34,-0.2) circle (1.25)
               (0.4,0.05) circle (1);
    \node at (0.4,0.8) {$\cdots$};
    \node at (-0.5, 0.15) {\tiny{$l_{n-5}$}};
    \node at (-0.8, -0.45) {\tiny{$l_{n-4}$}};
    \node at (-1.3, 0.1) {\tiny{$l_{n-3}$}};
    \node at (-1.9, -0.3) {\tiny{$l_{n-2}$}};
    \fill (0, -1.75) circle (1pt) node[below] {$p$};
  \end{tikzpicture}
  \caption{The graph after moving circles inside $D$.}
  \label{fig:Breaking}
\end{figure}

Suppose $l_{n-2}+a_{n-1} = a_n$. Then we show the initial  modification of $G_2$ as follows. Recall that in this case the subgraph $G_0$ of $G_2$ has a unique vertex $P$ of valency $4$ by Example \ref{ex:3Poles}. Denote by $E_{n-1}$ the circle of length $a_{n-1}$ of $G_0$. Choosing a sufficiently small $\varepsilon>0$, we at first decrease the length of $E_{n-2}$ to $l_{n-2}-2\varepsilon$ and break up the vertex $P$ into two vertices of valency $3$ by replacing it by a segment of length $\varepsilon>0$ connecting the two circles of $E_{n-1}$ and $E_{n-2}$; then we do the same thing for the vertex which is incident on the two circles $E_{n-2}$ and $E_{n-3}$ as  the middle picture in Figure \ref{fig:Surgery}. At last, we use the induction argument to complete the modification and obtain $\Gamma$. Suppose that the valency of each vertex on edges $E_{n-1}, E_{n-2}, \cdots, E_{i+1}$ is $3$ and there exists a vertex $v$ of valency $4$ on $E_i$. Taking a sufficiently small $\varepsilon_{i} >0$, we can break up $v$ into $2$ vertices, and connect them by a segment $C_{i}$ of length $\varepsilon_{i}$. Then we decrease the length of $E_i$ to $l_{i} - 2\varepsilon_{i}$ and increase the length of $C_{i+1}$ to $\varepsilon_{i+1} + \varepsilon_{i}$ as in Figure \ref{fig:induction}, where $\varepsilon_{n-2} = \varepsilon$. Doing the breaking-up procedure to the other vertices inside $D$ also as in Figure \ref{fig:induction}, we complete the induction argument and obtain a trivalent graph $\Gamma$ with residue vector $(a_1,\cdots, a_n)$.

\begin{figure}[H]
  \begin{center}
    \begin{tikzpicture}
      \draw (-4,1)--(-4,-1)
                (-4.4,1)--(-4.4,1.4);
      \draw (-4,1) arc (45:315:0.5cm and 1.4cm);
      \draw (-4,-1) arc (-135:135:0.5cm and 1.4cm);
      \draw (-4.4,0) ellipse (0.25 and 1);
      \node[rotate=90] at (-3.8,0) {\tiny{$\frac{l_{n-2}+a_{n-1}-a_n}{2}$} - $\varepsilon$};
      \node[rotate=90] at (-5.1,0) {\tiny{$\frac{l_{n-2}+a_n-a_{n-1}}{2}$} - $\varepsilon$};
      \node[rotate=90] at (-2.9,0) {\tiny{$\frac{a_{n-1}+a_n-l_{n-2}}{2}$} + $\varepsilon$};
      \node at (-4.3,1.2) {\tiny{$\varepsilon$}};
      \node at (-4.4,0.5) {$\vdots$};
      \node at (-4.6,-.3) {\footnotesize$D$};
      \node at (-4.4,-1) {\tiny{$l_{n-3}$}};
      \node at (-4,-1.7) {$l_{n-2} + a_{n-1} > a_n$};

      \draw plot[smooth] coordinates {(-0.1,0) (-0.5,1) (-1,0) (-0.5,-1) (-0.1,0)};
      \draw plot[smooth] coordinates {(0.1,0) (0.5,1) (1,0) (0.5,-1) (0.1,0)};
      \draw plot[smooth] coordinates {(-0.3,0) (-0.5,0.7) (-0.8,0) (-0.5,-0.6) (-0.3,0)};
      \draw (-0.5,0.7)--(-0.5,1);
      \draw (-0.1,0)--(0.1,0);
      \node[rotate=60] at (-1,0.6) {\tiny{$l_{n-2} - 2\varepsilon$}};
      \node[rotate=300] at (1,0.6) {\tiny{$a_{n-1}$}};
      \node at (0,0.1) {\tiny $\varepsilon$};
      \node at (-0.55,0.8) {\tiny $\varepsilon$};
      \node at (-0.5,0.4) {$\vdots$};
      \node at (-0.7,-0.2) {\footnotesize $D$};
      \node[rotate=60] at (-0.42,-0.55) {\tiny{$l_{n-3}$}};

      \node at (0,-1.7) {$l_{n-2} + a_{n-1} = a_n$};

      \draw (3.25,0)--(4.75,0)
               (2.75,1)--(2.75,1.4);
      \draw (5.25,0) ellipse (0.5 and 1.4)
                (2.75,0) ellipse (0.5 and 1.4)
                (2.75,0) ellipse (0.25 and 1);
      \node[rotate=90] at (2.1,0) {\tiny{$l_{n-2} - 2\varepsilon$}};
      \node[rotate=90] at (5.9,0) {\tiny{$a_{n-1}$}};
      \node[rotate=90] at (4,0.2) {\tiny{$\frac{a_{n}-l_{n-2}-a_{n-1}}{2} + \varepsilon$}};
      \node at (2.7,1.2) {\tiny{$\varepsilon$}};
      \node at (2.75,0.5) {$\vdots$};
      \node at (2.5,-.5) {\footnotesize $D$};
      \node at (2.75,-1) {\tiny{$l_{n-3}$}};
      \node at (4,-1.7) {$l_{n-2} + a_{n-1} < a_n$};
    \end{tikzpicture}
  \end{center}
\caption{The initial modificative graphs by breaking up of vertices.}
\label{fig:Surgery}
\end{figure}
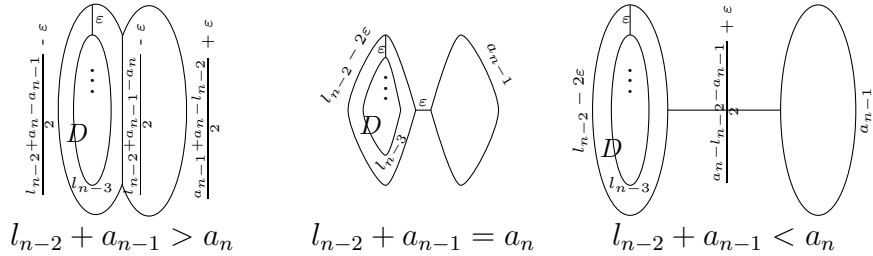

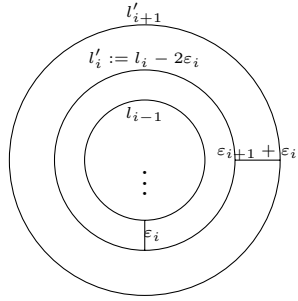
\begin{figure}[H]
\centering
\begin{tikzpicture}
  \draw (0,0) circle (1.8)
            (1.8,0)--(1.2,0)
            (0,0) circle (1.2)
            (0,-1.2)--(0,-0.8)
            (0,0) circle (0.8);
 \node at (1.5,0.1) {\tiny{$\varepsilon_{i+1} +\varepsilon_{i}$}};
 \node at (0.1,-1) {\tiny{$\varepsilon_{i}$}};
 \node at (0,1.95) {\tiny{$l'_{i+1}$}};
  \node at (0,1.35) {\tiny{$l_i':=l_i-2\varepsilon_{i}$}};
  \node at (0,0.6) {\tiny{$l_{i-1}$}};
  \node at (0,-0.2) {$\vdots$};
\end{tikzpicture}
\caption{Breaking up of vertices by induction.}
\label{fig:induction}
\end{figure}

The other two cases of $l_{n-2}+a_{n-1}>a_n$ and $l_{n-2}+a_{n-1}<a_n$ could be proved in a similar way. In detail,  we at first do the initial modification process for $G_2$ as the left and right pictures, respectively,  in  Figure \ref{fig:Surgery}. Then we apply the same induction argument as in Figure \ref{fig:induction} to obtain $\Gamma$.
\end{proof}

 For the case $g>0$, by the correspondence between Strebel differentials and metric ribbon graphs, we need to show
 \begin{lem}
 \label{Lem:HighGenus}
   Let $g$ and $n$ be two positive integers and $\alpha=(a_{1}, a_{2}, \cdots, a_{n})$ a vector in $(\mathbb{R}_{>0})^{n}$. Then there exist a compact oriented topological surface of genus $g$ and a connected trivalent metric ribbon graph on it which decomposes the surface into $n$ discs and has residue vector $\alpha$.
 \end{lem}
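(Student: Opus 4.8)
The plan is to realize the desired graph by starting from a graph on the sphere and then attaching $g$ handles one at a time, each handle being a local trivalent gadget that raises the genus by one while leaving the number of faces and their boundary lengths under control. Write $V,E,F$ for the numbers of vertices, edges and faces of a trivalent ribbon graph on a closed oriented surface of genus $g$. Trivalence gives $3V=2E$, and Euler's formula $V-E+F=2-2g$ with $F=n$ forces $V=4g-4+2n$ and $E=6g-6+3n$. Hence a single handle must change the data by $\Delta V=4$, $\Delta E=6$, $\Delta F=0$, with all new vertices trivalent. The whole argument reduces to exhibiting one such gadget together with enough control over the boundary length of the face into which it is inserted. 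By the correspondence invoked before the lemma it suffices to produce the metric ribbon graph.

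For $n\ge 3$ I would take as the base a trivalent metric ribbon graph $\Gamma_0$ on $\mathbb{S}^2=\overline{\mathbb C}$ with $n$ faces, provided by Example \ref{ex:3Poles} (for $n=3$) and Lemma \ref{Lem:SimpleZero} (for $n>3$); its residue vector can be prescribed to be any vector in $(\mathbb R_{>0})^n$, and I will choose it to be $\alpha$ with one entry, say the last, reduced by a small positive amount $\Delta$ fixed below. For the remaining cases I would start from the theta graph $T$ on the torus, i.e.\ the trivalent ribbon graph with two vertices, three edges and a single face whose boundary traverses each edge twice, so that the face length is $2(\ell_x+\ell_y+\ell_z)$ and can be scaled to any positive value; this settles $n=1,\,g\ge1$ after attaching $g-1$ handles. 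For $n=2$ I would first split the single face of $T$ by adding one bridge edge across it joining two subdivided boundary points: this performs $\Delta V=2,\ \Delta E=3,\ \Delta F=1$ at genus one, yields two disc faces, and by placing the endpoints and choosing the bridge length the two resulting lengths can be made equal to the two prescribed values, since their sum is the old face length plus twice the bridge length. Attaching $g-1$ handles then finishes $n=2,\ g\ge1$.

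The handle gadget itself is a connected sum with a copy of the theta torus $T$ performed inside one disc face $D$: remove a small open disc from the interior of $D$ and one from the interior of the face of $T$, and glue along the two resulting circles. This raises the genus by one, but the merged region is now an annulus bounded by the two graph cycles $\partial D$ (length $\ell(D)$) and $\partial(\text{face of }T)$ (length $\ell_T$), hence not a disc. I restore the disc property by adding a single bridge edge of length $\beta>0$ across the annulus, attaching its endpoints to the interiors of one edge of $\partial D$ and one edge of $\partial(\text{face of }T)$. Each endpoint subdivides an edge and becomes trivalent, so the gadget contributes exactly the two vertices of $T$ plus these two endpoints ($\Delta V=4$), the three edges of $T$ plus the bridge plus the two subdivisions ($\Delta E=6$), and it cuts the annulus back into one disc ($\Delta F=0$); trivalence and connectedness are preserved, and $V-E+F$ drops by $2$ as required. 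This is the step I expect to be the crux: a naive handle attachment either leaves a non-disc face or spawns spurious faces, i.e.\ extra double poles, and the role of the bridge-cut is precisely to balance the bookkeeping $(\Delta V,\Delta E,\Delta F)=(4,6,0)$ while keeping every face a disc.

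It remains to control the lengths. The new disc produced by one gadget has boundary walk running once around $\partial D$, once across the bridge, once around $\partial(\text{face of }T)$, and back, so its length is $\ell(D)+\ell_T+2\beta$. Attaching all handles to one designated face and setting $\Delta=\sum_i\bigl(\ell_T^{(i)}+2\beta^{(i)}\bigr)$, I choose the free positive parameters $\ell_T^{(i)},\beta^{(i)}$ so small that $\Delta$ is below the target length of that face, and give the base graph the value (target)$-\Delta$ there; the successive insertions then restore the target, while every other face is untouched and keeps its prescribed length. The single-genus base cases are handled by scaling $T$ analogously, and for $n=3$ one additionally picks $\Delta$ in its allowed interval so as to avoid the lone degenerate (non-trivalent) middle graph of Example \ref{ex:3Poles}, which costs nothing. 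Assembling these pieces gives, for every $g\ge1$ and $n\ge1$, a connected trivalent metric ribbon graph of genus $g$ decomposing the surface into $n$ discs with residue vector $\alpha$, as required.
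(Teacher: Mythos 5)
Your proposal is correct and follows essentially the same route as the paper: explicit base graphs for small $n$ (the theta graph on the torus for $n=1$, a chord-split of it for $n=2$, Example \ref{ex:3Poles} with a perturbation avoiding the degenerate middle graph for $n=3$, Lemma \ref{Lem:SimpleZero} for $n>3$), followed by induction on the genus via a handle attached inside one face whose boundary length is shortened in advance by the small amount the gadget contributes. Your handle gadget (connected sum with a theta-torus plus a bridge arc, with the $(\Delta V,\Delta E,\Delta F)=(4,6,0)$ bookkeeping made explicit) is just a more carefully itemized version of the paper's ``two curves of length $\epsilon$ on the handle,'' and the length correction $\ell_T+2\beta$ plays exactly the role of the paper's $4\epsilon$.
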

 \begin{proof}
We prove case $n>3$ by using Lemma \ref{Lem:SimpleZero} at first, and then show the three cases of $n=1,2$ and $3$ one by one.\\

\noindent {\bf Case 1.} Suppose $n>3$. Note that a compact connected oriented surface is homeomorphic to the two-sphere with $g$ handles. We prove the existence of the graph and the surface by induction on $g$. Assume that the result is valid for $g-1\geq 0$. That is, for any given $\alpha = (a_1, a_2, \cdots, a_n) \in (\mathbb{R}_{>0})^n$ and a sufficiently small $\epsilon >0$ such that $a_1-4\epsilon>0$, there exists a connected trivalent metric ribbon graph $\Gamma$ on an oriented compact surface of genus $g-1\geq 0$ such that $\Gamma$ decomposes the surface into $n$ topological discs and has residue vector $(a_1-4\epsilon, a_2, \cdots, a_n)$. As in Figure \ref{handle_graph}, we add a handle on the topological disc $D_1$ on the surface corresponding to the first residue $a_1-4\epsilon$  and draw two curves of lengths $\epsilon$ on the handle. Therefore, adding this handle endowed with these two curves to the original surface of genus $g-1$ endowed with the graph $\Gamma$, we obtain a new trivalent graph $\Gamma'$ with residue vector $\alpha$ embedded in a new oriented compact surface of genus $g$. We are done for this case.

\begin{figure}[H]
  \centering
  \begin{tikzpicture}
     \draw[gray, dashed] (-1,0) ellipse (0.5 and 1);
     \draw[gray] (-0.62,0.65) arc (40.5:319.5:0.5 and 1);
     \draw (-1,-0.35) arc (-160:160:2 and 1)
               (1,0) ellipse (0.5 and 0.2);
      \draw[gray] (-0.6,-0.25) arc (-150:150:1.6 and 0.6)
                        plot[smooth] coordinates {(-1.49,0.2) (-0.9,0.2) (-0.6,0.35)}
                        plot[smooth] coordinates {(-1.49,-0.2) (-0.9,-0.1) (-0.6,-0.25)}
                        (1,0.64) parabola[bend at end] (1.4,0.14)
                        plot[smooth] coordinates{(2.2,-0.7) (2.1,-0.8) (2,-0.8) (1.6,-0.75) (1.1,-0.53)} ;
    \draw[gray,dashed] plot[smooth] coordinates{(1.4,0.14) (1.7,0) (2,-0.3) (2.2,-0.7) (2.1,-0.8)} ;
    \node at (-1,1.1) {\tiny{$a_1 - 4\epsilon$}};
    \node at (-1.2,0.5) {\small{$D_1$}};
    \node at (1.9,0) {\tiny{$\epsilon$}};
    \node at (0.1,0.5) {\tiny{$\epsilon$}};
  \end{tikzpicture}
  \caption{Draw two curves on the handle.}
  \label{handle_graph}
\end{figure}
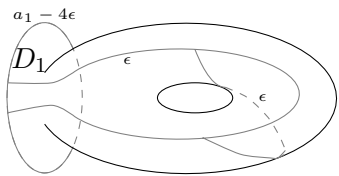

\noindent {\bf Case 2.} Suppose $n=3$.
By the adding-handle technique in Case 1, we only need to deal with the $g=1$ case.
Choose a sufficiently small $\epsilon >0$ such that $a_1-4\epsilon +a_2 \neq a_3$. By Example \ref{ex:3Poles}, there exists on ${\Bbb S}^2$ a trivalent metric ribbon graph with residue vector $(a_1-4\epsilon, a_2, a_3)$. By the adding-handle technique as Case 1, we could obtain the desired graph on a topological torus.\\

\noindent {\bf Case 3.} Suppose $n=2$.
Similarly, we only need to consider the $g=1$ case. Taking the following trivalent metric ribbon graph on a torus
\begin{figure}[H]
\begin{center}
\begin{tikzpicture}
\draw (0,0) ellipse (2.4 and 1.4);
\begin{scope}[scale=.8]
\path[rounded corners=24pt] (-.9,0)--(0,.6)--(.9,0) (-.9,0)--(0,-.56)--(.9,0);
\draw[rounded corners=28pt] (-1.1,.1)--(0,-.6)--(1.1,.1);
\draw[rounded corners=24pt] (-.9,0)--(0,.6)--(.9,0);
\end{scope}
\draw[gray] (0,0) ellipse (1.6 and .9);
\draw [gray, dashed] plot [smooth, tension=1] coordinates { (-0.5,-0.85) (-0.2,-0.17) (0.4,-1.35) (0.55,-0.85)};
\begin{scope}
  \clip (-0.55,-0.9) rectangle (-0.18,-0.15);
  \draw [gray] plot [smooth, tension=1] coordinates { (-0.5,-0.85) (-0.2,-0.17) (0.4,-1.35) (0.55,-0.85)};
\end{scope}
\begin{scope}
  \clip (0.35,-1.4) rectangle (0.6,-0.8);
  \draw [gray] plot [smooth, tension=1] coordinates { (-0.5,-0.85) (-0.2,-0.17) (0.4,-1.35) (0.55,-0.85)};
\end{scope}

\draw [gray, dashed] plot [smooth, tension=1] coordinates { (-0.5,0.85) (-0.3,1.35) (0.3,.2) (0.5,.85)};
\begin{scope}
  \clip (-0.55,0.8) rectangle (-0.25,1.4);
  \draw [gray] plot [smooth, tension=1] coordinates { (-0.5,0.85) (-0.3,1.35) (0.3,.2) (0.5,.85)};
\end{scope}
\begin{scope}
  \clip (0.25,.15) rectangle (0.55,.9);
  \draw [gray] plot [smooth, tension=1] coordinates { (-0.5,0.85) (-0.3,1.35) (0.3,.2) (0.5,.85)};
\end{scope}
\node at (-.1,.55) {\tiny{$a_2$}};
\node at (.2,-.55) {\tiny{$a_1$}};
\node at (-0.1,-1) {\tiny{$b_4$}};
\node at (0.2,1) {\tiny{$b_3$}};
\node at (-1.7,0) {\tiny{$b_1$}};
\node at (1.8,0) {\tiny{$b_2$}};

\end{tikzpicture}
\caption{The trivalent graph on a torus with two double poles.}
\end{center}
\end{figure}
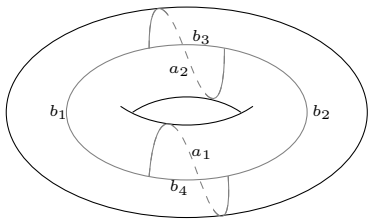
\nd we find that the residue vector $(2b_1 + a_1 + a_2 + b_3+b_4, 2b_2 + a_1 + a_2 +b_3 + b_4)$ of the graph could achieve each vector in $(\mathbb{R}_{>0})^2$ as $a_i$'s and $b_j$'s vary in ${\Bbb R}_{>0}$. \\

\noindent {\bf Case 4.} Suppose $n=1$. Then we are done by
taking the following trivalent metric ribbon graph with residue $2(a_1+b_1+b_2)$.
\begin{figure}[H]
\begin{center}
\begin{tikzpicture}
\draw (0,0) ellipse (2.4 and 1.4);
\begin{scope}[scale=.8]
\path[rounded corners=24pt] (-.9,0)--(0,.6)--(.9,0) (-.9,0)--(0,-.56)--(.9,0);
\draw[rounded corners=28pt] (-1.1,.1)--(0,-.6)--(1.1,.1);
\draw[rounded corners=24pt] (-.9,0)--(0,.6)--(.9,0);
\end{scope}

\draw[gray] (0,0) ellipse (1.6 and .9);
\draw [gray, dashed] plot [smooth, tension=1] coordinates { (-0.5,-0.85) (-0.2,-0.17) (0.4,-1.33) (0.55,-0.85)};
\begin{scope}
  \clip (-0.55,-0.9) rectangle (-0.18,-0.15);
  \draw [gray] plot [smooth, tension=1] coordinates { (-0.5,-0.85) (-0.2,-0.17) (0.4,-1.33) (0.55,-0.85)};
\end{scope}
\begin{scope}
  \clip (0.35,-1.4) rectangle (0.6,-0.8);
  \draw [gray] plot [smooth, tension=1] coordinates { (-0.5,-0.85) (-0.2,-0.17) (0.4,-1.33) (0.55,-0.85)};
\end{scope}
\node at (.2,-.55) {\tiny{$a_1$}};
\node at (-0.1,-1) {\tiny{$b_2$}};
\node at (0.2,1) {\tiny{$b_1$}};
\end{tikzpicture}
\caption{The trivalent graph on a torus with a double pole.}
\end{center}
\end{figure}
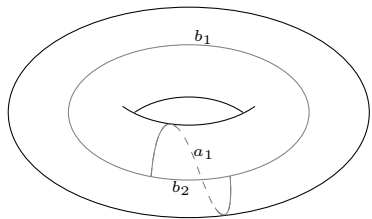
\end{proof}

\section{Cone angles of cone spherical metrics on the Riemann sphere}
\label{sec:sphere}
Reducible (irreducible) metrics are also called cone spherical metrics with {\it coaxial} ({\it non-coaxial}) holonomy by
Mondello-Panov \cite{MP1505} and Eremenko \cite{Er2017}. On the Riemann sphere  $\overline{\Bbb C}$, Mondello-Panov \cite[Theorems A and C]{MP1505} found a necessary and sufficient condition, say Condition (MP), satisfied by cone angles of irreducible metrics on $\overline{\Bbb C}$. Eremenko \cite[Theorem 1]{Er2017} obtained a corresponding angle condition, say Condition (E), for reducible metrics on $\overline{\Bbb C}$.  A combination of (MP) and (E) gives the complete information for cone angles of cone spherical metrics on $\overline{\Bbb C}$. We observe that the two conditions of (MP) and (E) do have an intersection in general. For example, the five cone angles of $6\pi, 3\pi, 3\pi, 7\pi/2, 7\pi/2$ satisfy both (MP) and (E) on $\overline{\Bbb C}$. We obtained in Corollaries \ref{cor:Strebel} and \ref{cor:quasi} a special class of cone spherical metrics constructed from Strebel differentials on compact Riemann surfaces, where the cone angles are explicitly given. We naturally ask on the Riemann sphere which one of the two conditions of (E) and (MP) the cone angles of these metrics in these two corollaries would satisfy, and give a partial answer as follows. \\

$\bullet$  If the cone spherical metrics in Corollary \ref{cor:Strebel} are defined by degenerate Strebel differentials on ${\overline {\Bbb C}}$, then by Remark \ref{rem:red} they are reducible and their cone angles satisfy Condition (E). On the other hand, as $a_1,\cdots, a_n$ are non-integers and $m_1,\cdots, m_\ell$ are odd integers, the cone spherical metrics  on ${\overline {\Bbb C}}$  in the corollary must be irreducible and their cone angles  satisfy Condition (MP) but not (E). In fact, it was shown in \cite[Theorem 1.4]{CWWX2015} that if a reducible metric on ${\overline {\Bbb C}}$ has more than two conical singularities, then at least one cone angle of the metric lies in $2\pi{\Bbb Z}_{>1}$.
\\

$\bullet$ We observe that the cone spherical metrics in Corollary \ref{cor:quasi} are always defined by non-degenerate Strebel differentials since the differentials there have simple zeroes.  If one of $a_1,\,a_2,\cdots, a_n$ is not an integer, then the cone spherical metrics on ${\overline {\Bbb C}}$ in the corollary have cone angles satisfying Condition (MP), which was also pointed out to us by one of the referees.  \\

$\bullet$ At last, we consider the cone spherical metrics on ${\overline {\Bbb C}}$  in Corollary \ref{cor:quasi} such that {\it $a_1,a_2,\cdots, a_n$ are all integers.} As $n=3$, Since the metrics have exactly two non-integral angles equal to $3\pi$, their monodromy groups is isomorphic to $\mathbb{Z}_2$. Hence, they are reducible and their angles satisfy Condition (E) but not Condition (MP) by a simple computation. As $n\geq 4$, by a simple computation, the metrics in the corollary always have cone angles satisfying Condition (MP). However, their angles do not necessarily satisfy Condition (E). For example,  (E) is satisfied by the six cone angles $4\pi, 4\pi, 3\pi,3\pi,3\pi,3\pi$ of the metrics corresponding to $(a_1,a_2,a_3,a_4)=(1,1,2,2)$ satisfy (E), but not by the four cone angles $3\pi,3\pi,3\pi,3\pi$ of the metrics corresponding to $(a_1,a_2,a_3,a_4)=(1,1,1,1)$. Recall that conical singularities with cone angle $2\pi$ are just marked smooth points of the metrics.

\section{Fundings}
The authors express their deep gratitude to the anonymous referees for their valuable suggestions and questions to the old version of the manuscript, according to which the authors improved its exposition greatly. In particular, the authors added a short section (Section \ref{sec:sphere}) to answer two questions by one of them. The first author is supported in part by the National Natural Science Foundation of China (grant no. 11426236), and the third by China Scholarship Council, and the last author  by the National Natural Science Foundation of China (grant no. 11571330) and the Fundamental Research Funds for the Central Universities.

{\footnotesize
}

\end{document}